\author{}
\title{}
\date{}
\newtheorem{lemma}{Lemma}[section]
\newtheorem{conjecture}[lemma]{Conjecture}
\newtheorem{proposition}[lemma]{Proposition}
\newtheorem{theorem}[lemma]{Theorem}
\newtheorem{appthm}{Theorem}
\newtheorem*{corollary-nonum}{Corollary}
\newtheorem*{lemma-nonum}{Lemma}
\theoremstyle{definition}
\newtheorem{remark}{Remark}[section]
\newtheorem*{remark-nonum}{Remark}
\newcommand{\bbQ}{{\mathbb Q}}
\newcommand{\bbR}{{\mathbb R}}
\newcommand{\bbZ}{{\mathbb Z}}
\newcommand{\rS}{\mathrm{S}}
\newcommand{\labthm}[2]{{\footnotesize \fbox{\makebox[0.25\width][c]{
                               \begin{tabular}{c}
                               #1 \\ \hspace{3pt}#2
                               \end{tabular}     }
                               }
                         }      
                       }
\numberwithin{equation}{section}
\numberwithin{table}{section}
\begin{document}

\title{Near-squares in binary recurrence sequences}
\author{Nikos Tzanakis \thanks{e-mail: {\tt tzanakis@uoc.gr}}
        \\
        Department of Mathematics \& Applied Mathematics \\
        University of Crete, Greece  
\and 
Paul Voutier \thanks{e-mail: {\tt paul.voutier@gmail.com}}
\\
London, United Kingdom
}
\maketitle

\begin{abstract}
We call an integer a \emph{near-square} if its absolute value is a square
or a prime times a square. We investigate such near-squares in the binary
recurrence sequences defined for integers $a \geq 3$ by $u_{0}(a)=0$,
$u_{1}(a)=1$ and $u_{n+2}(a)=au_{n+1}(a)-u_{n}(a)$ for $n \geq 0$.
We show that for a given $a \geq 3$, there is at most one $n \geq 5$ such that
$u_{n}(a)$ is a near-square. With the exceptions of $u_{6}(3)=12^{2}$ and
$u_{7}(6)=239 \cdot 13^{2}$, any such $u_{n}(a)$ can be a near-square only if
$a \equiv 2 \pmod{4}$, $n \equiv 3 \pmod{4}$ is prime and $n \geq 19$.

This is a part of a more general phenomenon regarding near-squares in non-degenerate
recurrence sequences defined for integers $a$ and $b=-b_{1}^{2}$ by $u_{0}(a,b)=0$,
$u_{1}(a,b)=1$ and $u_{n+2}(a,b)=au_{n+1}(a,b)+bu_{n}(a,b)$ for $n \geq 0$. This arises
from a novel Aurifeuillean type factorisation of $u_{2n+1}(a,b)$ we have found.
\end{abstract}

\tableofcontents

\section{Introduction}
\label{sec:intro}

The study of recurrence sequences and their arithmetic properties has a long history,
going back to at least Fibonacci. Well-known questions and conjectures include 
whether there are infinitely many primes in such sequences, their divisibility properties
and also the existence of perfect powers in such sequences.

In the early 1960's, Cohn and Wyler \cite{Cohn1,Cohn2,Wy} independently proved
that $u_{0}=0$, $u_{1}=u_{2}=1$ and $u_{12}=12^{2}$ are the only squares in the
Fibonacci sequence. 
Just over 40 years later, Bugeaud, Mignotte and Siksek \cite{BMS} proved that
$u_{0}=0$, $u_{1}=u_{2}=1$, $u_{6}=2^{3}$ and $u_{12}=12^{2}$ are the only perfect
powers in the Fibonacci sequence, as well as proving that $u_{1}=1$ and $u_{3}=2^{2}$
are the only perfect powers in the associated Lucas sequence (defined by 
$u_{0}=2$, $u_{1}=1$, $u_{n+2}=u_{n+1}+u_{n}$).
Many other results on powers and ``near'' powers for binary recurrence sequences
have been obtained too.
In particular, let us mention a result of Mignotte and Peth\H{o} \cite{MP} that
is relevant to our work here. To state
their result, let us first establish some notation.

Let $a>0$ and $b \neq 0$ be two relatively prime integers with $\Delta=a^{2}+4b \neq 0$,
and put
\begin{align}
\label{eq:seq-defn}
\alpha &=\frac{a+\sqrt{\Delta}}{2},\quad \beta=\frac{a-\sqrt{\Delta}}{2} 
\quad
\text{(roots of $X^{2}-aX-b$),}\\
u_{n}(a,b) &= u_{n}:=\frac{\alpha^{n}-\beta^{n}}{\alpha-\beta},\quad 
u_{0}=0, \,\, u_{1}=1, \,\, u_{n+2}=au_{n+1}+bu_{n}. \nonumber
\end{align}

A binary recurrence sequence is called \emph{non-degenerate} if $\alpha/\beta$
is not a root of unity, and $a$ and $b$ are non-zero relatively prime
integers.

We put $\rS= \left\{ x^{2}: x \in \bbZ \right\}$ and, for any non-zero integer, $c$,
let $c\rS=\{ cx: x \in \rS \}$. We say that an integer is a \emph{near-square}
if it is an element of $c\rS$ with $|c|=1$ or $|c|$ a prime.

Mignotte and Peth\H{o} \cite{MP} proved that, for $a>3$ and 
$n>3$, $u_{n}(a,-1)$ is never a square, or $2$, $3$ or $6$ times a square, except when 
$a=338$ and $n=4$.

Our results here extend those of Mignotte and Peth\H{o}. 
We investigate the problem of when elements $u_{n}(a,-1)$ are near-squares.
What is significant about such a result is that it is uniform in $p$. This is
not what one expects in general. For example, with the Fibonacci sequence
and the Mersenne numbers, one expects there are infinitely many primes and hence
infinitely elements of
the form $px^{2}$.
In fact, what we prove here is a subset of what we believe to be true.

\begin{conjecture}
\label{conj:new1}
Let $a$ and $b_{1}$ be relatively prime positive integers and put $b=-b_{1}^{2}$.
If $\left( u_{n}(a,b) \right)_{n \geq 0}$ is a non-degenerate binary recurrence sequence,
then $u_{n}(a,b) \in c\rS$ for $n>8$ and $|c|=1$ or $|c|$ prime if
and only if $(n,a,b,c)=(9,1,-4,-19)$, $(23,1,-4,-1747)$, $(11,4,-25,3719)$ or $(13,3,-4,181)$.
\end{conjecture}

In the special, but interesting, case when $b_{1}=1$, a sharper result appears
to hold.

\begin{conjecture}
\label{conj:new2}
For $a>2$, $|c|=1$ or $|c|$ prime, and $n>4$,
$u_{n}(a,-1) \in c\rS$ if and only if $(n,a,c) = (6,3,1)$ or $(7,6,239)$.
\end{conjecture}

What motivates, and underpins, our conjectures is that, for $b=-b_{1}^{2}$,
$u_{n}(a,b)$ can be factored as elements of other binary recurrence sequences
and this factorisation prevents $u_{n}(a,b)$ from being a square or a prime times
a square for $n$ sufficiently large.
More precisely, for $b=-b_{1}^{2}$, we define the sequences $\left( t_{n}(a,b) \right)_{n \geq 0}$,
$\left( v_{n}(a,b) \right)_{n \geq 0}$ and $\left( w_{n}(a,b) \right)_{n \geq 0}$
by $t_{0}=1$, $t_{1}=a-b_{1}$, $v_{0}=2$, $v_{1}=a$,
$w_{0}=1$, $w_{1}=a+b_{1}$ and all subject to the same recurrence relation as 
$u_{n}(a,b)$.
Then
\begin{equation}
\label{eq:u=t*w}
u_{2n+1}(a,b)=t_{n}(a,b)w_{n}(a,b)
\end{equation}
and
\[
u_{2n}(a,b)=u_{n}(a,b)v_{n}(a,b).
\]

The factorisation of $u_{2n}(a,b)$ holds for all binary recurrence sequences with
$u_{0}=0$ and $u_{1}=1$. It is the factorisation of $u_{2n+1}(a,b)$ as the product
of two elements from recurrence sequences that is significant here. It also
appears to be novel. We know of no previous occurrences of this relationship in the literature.
It only holds when $b=-b_{1}^{2}$. Hence our restriction to this case above.

For any $n$, $\gcd \left( t_{n}(a,b), w_{n}(a,b) \right)$ and
$\gcd \left( u_{n}(a,b), v_{n}(a,b) \right)$ are both at most $2$ (see Lemma~\ref{lem:gcd}
below). Hence the problem of finding near-squares in our $\left( u_{n} \right)_{n \geq 0}$
comes down to finding elements in $\rS$ and $2\rS$ in these four sequences.

Using PARI/GP \cite{Pari}, we also checked these conjectures by searching for
near-squares among the $u_{n} \left( a, -b_{1}^{2} \right)$ for non-degenerate
sequences with $1 \leq a, b_{1} \leq 2000$, $\gcd \left( a, b_{1} \right)=1$,
$n \leq 60$ and $\left| u_{n} \right|<10^{50}$. No further examples were found.

Concerning our conjectures above, we have obtained results when $b=-1$ and $c>3$
a prime (note that $c=2$ and $c=3$ were treated by Mignotte and Peth\H{o} \cite{MP}).
To simplify our notation, instead of 
writing $t_{n}(a,-1)$, $u_{n}(a,-1)$, $v_{n}(a,-1)$ and $w_{n}(a,-1)$, we will
simply write $t_{n}$, $u_{n}$, $v_{n}$ and $w_{n}$, except if it is necessary to stress the 
dependence on the parameter $a$; in such cases, we will write $t_{n}(a)$, $u_{n}(a)$,
$v_{n}(a)$ or $w_{n}(a)$.

In Appendix~\ref{app:ljunggren}, we provide a translation of the statements of
the theorems in \cite{Ljun54}, with Theorem~IV there corrected. On the arXiv
\cite{Ljun-arxiv}, we also provide the complete text of this paper in its original
German and our English translation of it. As this paper is difficult to obtain,
we hope that this is also of interest and benefit to researchers in this area.

We hope that this work motivates others to generalise our results and investigate
our conjectures, as they are certainly of considerable interest for the study of
the arithmetic properties of binary recurrence sequences.

\section*{Acknowledgements}

Our work benefitted from discussions with Gary Walsh,
so we gratefully thank him for his time and suggestions.
We also thank Andrew Granville for his questions about possible connections
between our factorisation, \eqref{eq:u=t*w}, and Aurifeuillean factorisations.
His questions led us to investigating and understanding this connection.
Lastly, we thank the referees for their careful reading and suggestions, which considerably improved
the presentation of the paper.

\section{Our results}
\label{sec:results}

Recall that, as mentioned above, $c\rS$ with $c=1$ ,$2$, $3$, as well as $c=6$,
was previously treated by \cite{MP}; see Proposition~\ref{prop:mp}(a) below.
Therefore, from Section~\ref{sec:u_2n} onward, we will assume that the prime
$p$ is at least $5$.

The last part of our main result (Theorem~\ref{thm:at most one solution}) is
based on the following conjecture of Togb\'{e}, Voutier and Walsh.

\begin{conjecture}[Conjecture 1.1 of \cite{TogVouWal2005}]
\label{conj:TogVouWalsh}
If $t>1$ is an integer, then the only positive integer solution of
$(t+1)X^{4}-tY^{2}=1$ is $(X,Y)=(1,1)$, except if $t=m^{2}+m$ $(m>0)$, 
in which case there is also the solution $(X,Y)=\left( 2m+1,4m^{2}+4m+3 \right)$.
\end{conjecture}

\subsection{Our main result}
\label{subsec:results}

Now we state the main result of this paper.

\begin{theorem}
\label{thm:at most one solution}
The relation $u_{N}(a)\in p\rS$ with $a\geq 3$, $N \geq 5$ and prime $p$
holds only if all three of the following conditions \emph{(i)--(iii)} hold:

\emph{ (i)} $p \geq 5$,

\emph{ (ii)} $a\equiv 2 \pmod{4}$,

\emph{(iii)} either $N=7$, in which case the only instance of $u_{7}(a)\in p\rS$
is $(a,p)=(6,239)$, or $N \geq 19$ is a prime with $N \equiv 3 \pmod{4}$.

Moreover, for fixed $a \equiv 2 \pmod{4}$, there exists at most one pair $(N,p)$
such that $u_{N}(a)\in p\rS$.

Finally, if Conjecture~\ref{conj:TogVouWalsh} is true, then 
$u_{N}(a)\in p\rS$ with $N\geq 5$, $a\geq 3$ and prime $p$ holds only when $(a,N,p)=(6,7,239)$.
%
\end{theorem}

\begin{remark-nonum}
From the last statement of our theorem, it follows that a proof of Conjecture~\ref{conj:TogVouWalsh}
would also prove our Conjecture~\ref{conj:new2}.
\end{remark-nonum}

\begin{proof}
First we show that both $u_{N}(a)\in 2\rS$ and $u_{N}(a)\in 3\rS$ are impossible
with $a \geq 3$ and $N\geq 5$. Indeed, if $a\geq 4$ and $N\geq 5$, then both
$u_{N}(a)\in 2\rS$ and $u_{N}(a)\in 3\rS$ are 
impossible by a result of Mignotte-Peth\H{o} \cite{MP} (see Proposition~\ref{prop:mp}(a)).
Next, let $a=3$ and $N\geq 5$.
If $u_{N}(3)\in 2\rS$, then $N=6$, in view of Lemma~\ref{lem:RibMcDan}(b),
but $u_{6}(3)=12^{2}$, hence $u_{N}(3)\not\in 2\rS$. Also, $u_{N}(3)\not\in 3\rS$
by Lemma~\ref{lem:u_n(3)}. Thus we have proved the necessity of condition~(i).

For the remainder of the proof, we assume that $u_{N}(a)\in p\rS$
with $a \geq 3$, $N \geq 5$ and prime $p \geq 5$.

By Proposition~\ref{prop:u_2n in pS} and Proposition~\ref{prop:u_2n+1 not square with odd a},
we have $N$ is odd, say $N=2n+1$ with $n \geq 2$, and $a$ is even.
By Lemma~\ref{lem:t_n or w_n must be square}, it follows that either
$w_{n}(a)\in\rS$ or $t_{n}(a)\in\rS$ (cf.~\eqref{eq:u=t*w}).
By Lemma~\ref{lem:w_n in S}, we cannot have $w_{n}(a) \in \rS$. Hence
$t_{n}(a)\in\rS$.

By Lemma~\ref{lem:t_n square,general a}, $a \equiv 2 \pmod{4}$,
$n$ is odd and $2n+1$ is equal either to $7$ (in which case $a=6$) or to a prime $\geq 19$.
Thus we have proved the necessity of conditions~(ii) and (iii).

Moreover, for such a fixed $a$, Lemma~\ref{lem:t_n square,general a}
also asserts that there is at most one 
$n>1$ with $t_{n}(a)$ a square. Hence for any fixed $a$, there exists at most one
pair $(N,p)$ such that $u_{N}(a)\in p\rS$.

Finally, assume the truth of Conjecture~\ref{conj:TogVouWalsh}. According
to what we have already proved, $a \equiv 2 \pmod{4}$ and $t_{n}(a) \in \rS$, where
$N=2n+1$. Therefore, as in the beginning of the proof of 
Lemma~\ref{lem:t_n square,general a}, let us put $t_{n}(a)=T^{2}$ and $w_{n}(a)=W$,
so that
$((a+2)/4)T^{4}-((a-2)/4)W^{2}=1$ (cf. \eqref{eq:(a+2)/4*T^4-(a-2)/4*W^2=1}).
This is an equation which falls under the scope of Conjecture~\ref{conj:TogVouWalsh}.
According to that conjecture, no positive integer solution $(T,W)\neq (1,1)$ exists except if
$a-2=4 \left( m^{2}+m \right)$ for some positive integer $m$, in which case we also have the
solution
$(T,W)=\left( 2m+1,4m^{2}+4m+3 \right)$. This implies that 
$\left( t_{n}(a),w_{n}(a) \right)=((2m+1)^{2},4m^{2}+4m+3)$.
In particular, $w_{n}(a)-t_{n}(a)=2$ which is impossible because using
\eqref{eq:tn-closed}, \eqref{eq:un-closed} and \eqref{eq:wn-closed} we find that
$w_{n}(a)-t_{n}(a)=2u_{n}(a)>2$ for $n \geq 2$. 
Thus, Conjecture~\ref{conj:TogVouWalsh} implies that 
$u_{N}(a)\in p\rS$ is impossible under our conditions.
\end{proof}

The proof of Theorem~\ref{thm:at most one solution} is based on several intermediate
results established in this paper. We believe that the ``tree'' in
Appendix~\ref{app:interdependence} may be helpful to the reader.
It shows the interdependence of the various results proved in this paper.

Our study of $u_{n}(a)\in p\rS$ focuses only on subscripts $n>4$.

For $n=2$, we have $u_{2}(a)=a$, so near-squares among $u_{2}(a)$ are not very
interesting.

We have $u_{3}(a)=a^{2}-1$, so that $u_{3}(a)\in p\rS$ leads to an equation $a^{2}-py^{2}=1$ 
which, for every prime $p$ has infinitely many solutions $(a,y)$.

Finally, $u_{4}(a)=a^{3}-2a$ and it is easy to find pairs $(a,p)$ with prime $p\geq 5$,
such that $a^{3}-2a\in p\rS$, for example, $(a,p)=(10,5), (58,29), (8,31)$ and $(9,79)$.
Also if $a=2p$ and $2p^{2}-1$ is a square, then $u_{4}(a) \in pS$ and it is
conjectured that there are infinitely many primes, $p$, such that $2p^{2}-1$ is
a square. There are also many other conjecturally infinite families of examples
like this.

\subsection{Our Factorisation of $u_{2n+1} \left( a,-b_{1}^{2} \right)$}
\label{subsec:factor}

Aurifeuillian factorisations are factorisations of $x^{n}-y^{n}$ or (what is
essentially the same) the $n$-th cyclotomic polynomial, $\Phi_{n}(x,y)$,
for special choices of $x$ and $y$.
They arise from being able to write $x^{n}-y^{n}$ or $\Phi_{n}(x,y)$
as a difference of two squares for these special values of $x$ and $y$.

A well-known example is
$2^{4n+2}+1=\left( 2^{2n+1}-2^{n+1}+1 \right) \left( 2^{2n+1}+2^{n+1}+1 \right)$.
This arises from setting $x=2^{2n+1}$ and $y=1$ in
$\Phi_{4}(x,y)=x^{2}+y^{2}=(x+y)^{2}-2xy$.

These expressions, and the resulting factorisations, go back to Gauss \cite[Art. 356--357]{Gauss},
who found that $4\Phi_{n}(x)=A_{n}(x)^{2}-(-1)^{(n-1)/2}nB_{n}(x)^{2}$ for
odd square-free $n>1$ and $A_{n}(x), B_{n}(x) \in \bbZ[x]$. They have been studied
extensively since then. We refer the reader to \cite{Brent1, GP, Sch, St} as
some of the significant papers from the past 60 years. They also provide a more
extensive history of the subject.

Most of the Aurifeullian factorisations are for rational integer values of $x$
and $y$, although a few Aurifeullian factorisations are known for other numbers,
like
\[
L_{10n+5}=L_{2n+1} \left( 5F_{2n+1}^{2}-5F_{2n+1}
              +1 \right) \left( 5F_{2n+1}^{2}+5F_{2n+1}+1 \right),
\]
where $F_{n}$ is the $n$-th Fibonacci number and $L_{n}$ is the $n$-th Lucas
number -- here $x,y=\left( 1 \pm \sqrt{5} \right)/2$.

The factorisation in equation~\eqref{eq:u=t*w} looks suspiciously like an
Aurifeullian factorisation. Our investigations following Andrew Granville's
questions led to us understanding the Aurifeullian connection.

For $n \geq 0$, we can write
\begin{equation}
\label{eq:Aurifeullian}
\frac{x^{2n+1}-y^{2n+1}}{x-y} = \left( \frac{x^{n+1}-y^{n+1}}{x-y} \right)^{2}
-xy \left( \frac{x^{n}-y^{n}}{x-y} \right)^{2}.
\end{equation}

This relationship is easily proven by routine algebraic manipulation. If $xy$
is a square, then the right-hand side is a difference of two squares. Furthermore,
the two terms on the right-hand side are symmetric functions in two
variables, so if we let $x$ and $y$ be algebraic integers of degree $2$ that are
algebraic conjugates, then both terms are rational integers leading to a
difference of two squares factorisation of the left-hand side.

Letting $x=\alpha$ and $y=\beta$, which are algebraic conjugates, then
$xy=b_{1}^{2}$ and we obtain an Aurifeullian factorisation of 
$u_{2n+1} \left( a, -b_{1}^{2} \right)$.
We show below in Lemma~\ref{lem:u=t*w} that this Aurifeullian factorisation is
our factorisation in \eqref{eq:u=t*w}.

If $2n+1$ is prime, then this factorisation also follows from Theorem~1 of \cite{Sch}
with $m=1$.

As well as its importance for our work here, our factorisation in \eqref{eq:u=t*w},
and those in the next subsection, also shows that when not restricting to rational
integer values of $x$ and $y$, there are an abundance of other Aurifeullian factorisations.

\subsection{Further Factorisations}
\label{subsec:factor2}

We found more factorisations as well. We can write
\[
u_{m(2n+1)} \left( a, -mb_{1}^{2} \right)
= u_{2n+1}\left( a, -mb_{1}^{2} \right) f_{1,m,n} \left( a, -mb_{1}^{2} \right)
f_{2,m,n} \left( a, -mb_{1}^{2} \right),
\]
where $f_{1,m,n}, f_{2,m,n} \in \bbZ[x,y]$ and $m \equiv 1 \pmod{4}$ is prime.

After realising the relevance of Schinzel's work \cite{Sch} for such
factorisations, we found that these factorisations also come from Theorem~1
there with $n=m$ in the notation of \cite{Sch}. The condition that $m \equiv 1 \pmod{4}$
arises from equation~(1) in Theorem~1 of \cite{Sch}. In order for the right-hand
side to be a difference of two squares, we need the Jacobi symbol, $(-1|m)$, to
equal $1$. As is well-known, for $m$ an odd prime, this only occurs for $m \equiv 1 \pmod{4}$.

\section{Preliminaries}
\label{sec:prelim}

We collect in this section some results about our sequences that we will need
throughout our work.

Using the theory of recurrence sequences (see, for example, Chapter~C of \cite{ShTi})
and the recurrence relation satisfied by the four sequences, we find that the
companion polynomial of the four sequences is $x^{2}-ax+b_{1}^{2}$.
Its roots are $\alpha$ and $\beta$ given in \eqref{eq:seq-defn}
(so $\Delta=a^{2}-4b_{1}^{2}$).
We can write
\begin{align}
\label{eq:tn-closed}
t_{n}\left( a, -b_{1}^{2} \right) &= 
\frac{(\alpha-b_{1})\alpha^{n}-(\beta-b_{1})\beta^{n}}{\sqrt{\Delta}}, \\
\label{eq:un-closed}
u_{n}\left( a, -b_{1}^{2} \right) &= \frac{\alpha^{n} - \beta^{n}}{\alpha-\beta}
= \frac{\alpha^{n} - \beta^{n}}{\sqrt{\Delta}}, \\
\label{eq:vn-closed}
v_{n}\left( a, -b_{1}^{2} \right) &= \alpha^{n}+\beta^{n} \text{ and} \\
\label{eq:wn-closed}
w_{n}\left( a, -b_{1}^{2} \right) &= 
\frac{(\alpha+b_{1})\alpha^{n}-(\beta+b_{1})\beta^{n}}{\sqrt{\Delta}}.
\end{align}

\begin{lemma}
\label{lem:u=t*w}
For $n \geq 0$, \eqref{eq:u=t*w} holds. That is,
\[
u_{2n+1}\left( a, -b_{1}^{2} \right)=t_{n}\left( a, -b_{1}^{2} \right)w_{n}
\left( a, -b_{1}^{2} \right).
\]

Furthermore, this is the Aurifeullian factorisation of $u_{2n+1}$ from 
Subsection~\ref{subsec:factor}.
\end{lemma}

\begin{proof}
This relationship follows by expanding the right-hand side using \eqref{eq:tn-closed}
and \eqref{eq:wn-closed}, and then simplifying to find that it equals \eqref{eq:un-closed}
for $u_{2n+1}$.

Substituting $x=\alpha$ and $y=\beta$ into \eqref{eq:Aurifeullian} and factoring, we obtain
\[
u_{2n+1}\left( a, -b_{1}^{2} \right) =
\left( \frac{\alpha^{n+1}-\beta^{n+1}}{\alpha-\beta} - 
\sqrt{\alpha\beta} \frac{\alpha^{n}-\beta^{n}}{\alpha-\beta} \right)
\left( \frac{\alpha^{n+1}-\beta^{n+1}}{\alpha-\beta} 
+ \sqrt{\alpha\beta} \frac{\alpha^{n}-\beta^{n}}{\alpha-\beta} \right).
\]

Choosing the square roots such that $\sqrt{\alpha} \sqrt{\beta}=b_{1}$, this becomes
\[
u_{2n+1}\left( a, -b_{1}^{2} \right) =
\left( \frac{\alpha^{n+1/2}+\beta^{n+1/2}}{\sqrt{\alpha}+\sqrt{\beta}} \right)
\left( \frac{\alpha^{n+1/2}-\beta^{n+1/2}}{\sqrt{\alpha}-\sqrt{\beta}} \right).
\]

A routine manipulation using \eqref{eq:tn-closed} and \eqref{eq:wn-closed} shows that the two terms 
on the right-hand side are $t_{n}\left( a, -b_{1}^{2} \right)$ and $w_{n}\left( a, -b_{1}^{2} \right)$, 
respectively.
\end{proof}

\begin{lemma}
\label{lem:seq-relationships}
\begin{align}
\label{eq:un2-vn2-relationship}
v_{n}^{2}\left( a, -b_{1}^{2} \right) - 
\left( a^{2}-4b_{1}^{2} \right) u_{n}^{2}\left( a, -b_{1}^{2} \right)
&= 4b_{1}^{2n},\\
\label{eq:tn2-wn2-relationship}
\left( a+2b_{1} \right)t_{n}^{2}\left( a, -b_{1}^{2} \right)
- \left( a-2b_{1} \right)w_{n}^{2}\left( a, -b_{1}^{2} \right)
&= 4b_{1}^{2n+1}.
\end{align}
\end{lemma}

\begin{proof}
These relationships follow by expanding their left-hand sides using \eqref{eq:un-closed}
and \eqref{eq:vn-closed} for \eqref{eq:un2-vn2-relationship}, as well as \eqref{eq:tn-closed}
and \eqref{eq:wn-closed} for \eqref{eq:tn2-wn2-relationship}, and then
simplifying the resulting expressions.
\end{proof}

\begin{lemma}
\label{lem:gcd}
{\rm (a) } If $\gcd(a,b)=1$, then
\begin{equation}
\label{eq:gcd u_{n},v_{n}}
\gcd \left( u_{n}(a,b), v_{n}(a,b) \right) =
\begin{cases}
  2 & \text{if $b$ is odd and either (1) $a$ is odd and $3\mid n$} \\
    & \text{or (2) both $a$ and $n$ are even,} \\
  1 & \text{otherwise.}
\end{cases} 
\end{equation}

{\rm (b)} If $\gcd \left( a, b_{1} \right)=1$, then
\begin{equation}
\label{eq:gcd(t,w)}
\gcd \left( t_{n} \left( a, -b_{1}^{2} \right), w_{n} \left( a, -b_{1}^{2} \right) \right)
=\begin{cases}
2 & \text{if $a$ and $b_{1}$ are both odd and $n \equiv 1 \pmod{3}$,} \\
1 & \text{otherwise.}
\end{cases}
\end{equation}
\end{lemma}

\begin{proof}
(a) This is Theorem~0 of \cite{McD} as well as the statement after the Main Theorem
there. But it goes back to Lucas, as the author writes there.

(b) Since $\gcd \left( a, b_{1} \right)=1$, we find that 
$\gcd \left( t_{n}\left( a, -b_{1}^{2} \right), b_{1} \right)
=\gcd \left( w_{n}\left( a, -b_{1}^{2} \right), b_{1} \right)=1$. Hence, from
\eqref{eq:tn2-wn2-relationship}, we have 
$\gcd \left( t_{n}\left( a, -b_{1}^{2} \right), w_{n}\left( a, -b_{1}^{2} \right) \right)|2$.

Also both $t_{n} \left( a, -b_{1}^{2} \right)$ and $w_{n} \left( a, -b_{1}^{2} \right)$
are always odd when either $a$ or $b$ is even. So if\\
$\gcd \left( t_{n}\left( a, -b_{1}^{2} \right), w_{n}\left( a, -b_{1}^{2} \right) \right)=2$,
then both $a$ and $b_{1}$ must be odd.

By induction, if $a$ and $b$ are both odd, then $t_{n}\left( a, -b_{1}^{2} \right)$
and $w_{n}\left( a, -b_{1}^{2} \right)$ are both odd when $n \equiv 0,2 \pmod{3}$
and both even when $n \equiv 1 \pmod{3}$.
\end{proof}

We shall require the following observations several times in what follows.

\begin{remark}
\label{rem:increasing}
For $a \geq b_{1}+2$, the element of index $1$ in each of these sequences is larger
than the element of index $0$. If, in addition, $a>b_{1}^{2}+1$, then the recurrence
relation and induction show that the four sequences are strictly increasing as $n$
increases. So for $b_{1}=1$ and $a \geq 3$, the four sequences are strictly
increasing as $n$ increases.

From this, it also follows that for $b_{1}=1$ and any fixed $n \geq 2$, the four
sequences (here indexed by $a$) are strictly increasing as $a \geq 3$ increases.
\end{remark}

\section{Results from the literature and some consequences}
\label{sec:results on Lucas seqs}

\subsection{Results on Lucas sequences}
\label{subsec:results on Lucas seqs}

Some of the results obtained in \cite{RibMcDan} are included in the following 
lemma.

\begin{lemma}
\label{lem:RibMcDan}
Let $a,b$ be relatively prime odd integers with $a^{2}+4b>0$.

\emph{(a)}\: If $n>0$ and $u_{n}(a,b) \in \rS$, then $n\in\{1,2,3,6,12\}$.

\emph{(b)}\: If $n>0$ and $u_{n}(a,b) \in 2\rS$, then $n\in\{3,6\}$.

\emph{(c)}\: If $n>0$ and $v_{n}(a,b) \in\rS$, then $n\in\{1,3,5\}$.\\
Furthermore,
if $v_{3}(a,b) \in \rS$, then $-b \equiv 3 \pmod{4}$ and if $v_{5}(a,b) \in \rS$,
then $a \equiv -b \equiv 5 \pmod{8}$.

\emph{(d)}\: If $n>0$ and $v_{n}(a,b) \in 2\rS$, then $n\in\{3,6\}$.
\end{lemma}

\begin{proof}
Note that $U_{n}(a,b)$ and $V_{n}(a,b)$ in \cite{RibMcDan} are $u_{n}(a,-b)$
and $v_{n}(a,-b)$, respectively, in our notation.
\\
(a)\: This is \cite[Theorem~3(a)]{RibMcDan}.
\\
(b)\: This is \cite[Theorem~4(a)]{RibMcDan}.
\\
(c)\: This is \cite[Theorem~1]{RibMcDan}.
\\
(d)\: This is \cite[Theorem~2(a)]{RibMcDan}.
\end{proof}

\subsection{Results on Diophantine equations}
\label{subsec:results on Diophantine eqns}

We collect here the statements of some known results on the
integer solutions of Diophantine equations of the form $Ax^{4}-By^{2}=C$
for $C=\pm 1, \pm 2, \pm 4$ that we will use to prove our results. We preserve
the notation of the original papers.

\begin{lemma}
\label{lem:Cohn66}
Let $d$ be a positive integer such that $X^{2}-dY^{2}=-4$ has an integer solution
with $XY$ odd.
If $d=5$, then the only integer solutions of $y^{2}-dx^{4}=4$ with $x \geq 0$
are $x=0$, $1$, and $12$.
If $d \neq 5$, then the only integer solutions of $y^{2}-dx^{4}=4$ with $x \geq 0$
are $x=0$ and possibly one other.
\end{lemma}

\begin{proof}
This is the ``Equation~6'' result stated on page~163 of \cite{Cohn66}.
\end{proof}

\begin{lemma}
\label{lem:Cohn67}
Let $d$ be a positive integer such that $X^{2}-dY^{2}=4$ has odd solutions, but
$X^{2}-dY^{2}=-4$ has no odd solutions. Let $\left( a_{1}, b_{1} \right)$ be the
minimal solution in odd positive integers of $X^{2}-dY^{2}=4$.
Apart from $x=0$, the equation $y^{2}-dx^{4}=4$ has at most two solutions with $x \geq 0$.
They come from $x=b_{1}$, $x=a_{1}b_{1}$ or $x=b_{1} \left( a_{1}^{2}-1 \right)$.
\end{lemma}

\begin{proof}
First note that $y^{2}-dx^{4}=4$ is covered by the ``Equation~4'' result stated
on page~69 of \cite{Cohn67}.

Let $(X,Y)=\left( a_{1}, b_{1} \right)$ be the smallest odd solution of
$X^{2}-dY^{2}=4$. Note that to avoid confusion with our notation here, we have
used $a_{1}$ in place of Cohn's $a$ and $b_{1}$ in place of his $b$ (both
defined after equation~(2) of his paper \cite{Cohn67}).

For the same reason, we denote by $\left( U_{n} \right)_{n \geq 0}$ the sequence
that Cohn denotes by $\left( u_{n} \right)_{n \geq 0}$ on page~62 of in \cite{Cohn67}.
So $U_{0}=0$, $U_{1}=b_{1}$, $U_{2}=a_{1}U_{1}-U_{0}=a_{1}b_{1}$ and
$U_{3}=a_{1}U_{2}-U_{1}=a_{1}^{2}b_{1}-b_{1}=b_{1} \left( a_{1}^{2}-1 \right)$.

As Cohn states after his ``Equation~4'' result, the possible solutions with
$x$ a positive integer come from at most two of $x^{2}=U_{1}=b_{1}$,
$x^{2}=U_{2}=a_{1}b_{1}$ and $x^{2}=U_{3}=b_{1} \left( a_{1}^{2}-1 \right)$.

In his Theorem~3 of \cite{Cohn67}, Cohn states that $x^{2}=U_{1}$ is only possible
if $b_{1}$ is a perfect square, $x^{2}=U_{2}$ is only possible if both $a_{1}$
and $b_{1}$ are perfect squares and that $x^{2}=U_{3}$ is only possible if $b_{1}$
is $3$ times a square.
\end{proof}

Let $A$ and $B$ be odd positive integers and suppose that $Az_{1}^{2}-Bz_{2}^{2}=4$
has solutions in odd positive integers $z_{1}$ and $z_{2}$. Furthermore, let
$\left( a_{1},b_{1} \right)$ be the least solution of $Az_{1}^{2}-Bz_{2}^{2}=4$
in odd positive integers.

\begin{lemma}
\label{lem:L67}
The diophantine equation $Ax^{4}-By^{2}=4$ has at most two solutions in positive
integers $x$ and $y$. If $a_{1}=h^{2}$ and $Aa_{1}^{2}-3=k^{2}$, there are two
solutions, namely $x=h$ and $x=hk$. If $a_{1}=h^{2}$ and $Aa_{1}^{2}-3 \neq k^{2}$,
there is one solution, $x=h$. If $a_{1}=5h^{2}$ and 
$A^{2}a_{1}^{4}-5Aa_{1}^{2}+5=5k^{2}$,
there is one solution, $x=5hk$.
\end{lemma}

\begin{proof}
This is Theorem~1 of \cite{Ljun67}.
\end{proof}

Let $D$ be a positive nonsquare integer and $\varepsilon_{D}=T_{1}+U_{1}\sqrt{D}$
be the minimal unit greater than $1$ of norm $1$ in
$\bbZ \left[ \sqrt{D} \right]$. For $k \geq 1$, write $\varepsilon_{D}^{k}
=T_{k}+U_{k}\sqrt{D}$.

\begin{lemma}
\label{lem:TVW2005}
The equation $X^{2}-DY^{4}=1$ has at most two solutions in positive integers,
$X$ and $Y$. If two such solutions,
$\left( X_{1}, Y_{1} \right)$ and $\left( X_{2}, Y_{2} \right)$ exist with
$Y_{1}<Y_{2}$, then $Y_{1}^{2}=U_{1}$ and $Y_{2}^{2}=U_{2}$, except if
$D=1785$ or $D=16 \cdot 1785$, in which cases $Y_{1}^{2}=U_{1}$ and
$Y_{2}^{2}=U_{4}$.
\end{lemma}

\begin{proof}
This is Theorem~1.1(i) in \cite{TogVouWal2005}.
\end{proof}

\subsection{Some consequences}
\label{subsec:consequences}

\emph{
Throughout the remainder of the paper, we will deal with the case of $b=-1$.
For this reason, we will simplify the notation by writing $t_{n}(a)$, $u_{n}(a)$,
$v_{n}(a)$ and $w_{n}(a)$ instead of $t_{n}(a,-1)$, $u_{n}(a,-1)$, $v_{n}(a,-1)$
and $w_{n}(a,-1)$, respectively.}
 
Using Lemma~\ref{lem:RibMcDan}, we prove the following result.

\begin{lemma}
\label{lem:u_n(3)}
$u_{n}(3) \not\in 3\rS\cup 6\rS$ for every $n\geq 3$.
\end{lemma}

\begin{proof}
For simplicity, we will write $u_{n}$ instead of $u_{n}(3)$ and $v_{n}$ instead
of $v_{n}(3)$. Assume that $u_{n} \in 3\rS\cup 6\rS$. So $3 \mid u_{n}$. We
have $u_{2}=a=3$ here. Furthermore, $\left( u_{k}(a,b) \right)_{k \geq 0}$ is a
strong divisibility sequence, so
$u_{\gcd(k_{1},k_{2})}(a,b)=\gcd \left( u_{k_{1}}(a,b), u_{k_{2}}(a,b) \right)$.
It follows that $n$ is even, and we put $n=2m$, so that $u_{n}=u_{2m}=u_{m}v_{m}$.
Recall from Lemma~\ref{lem:gcd} that
$\gcd \left( u_{m},v_{m} \right)=1$ or $2$, according as $3 \nmid m$ or $3 \mid m$.
Also, $m \geq 2$ because $n \geq 3$.

First we consider the case $u_{n} \in 3\rS$. If $3 \nmid m$, then, from
$u_{n}=u_{m}v_{m} \in 3\rS$ and $\gcd \left( u_{m},v_{m} \right)=1$,
it follows that either $u_{m}$ or $v_{m}$ is a square. By parts~(a) and (c)
of Lemma~\ref{lem:RibMcDan} and $3 \nmid m$, it follows that $m\in\{2,5\}$. We
complete these remaining cases at the end of the proof.

If $3\mid m$, then $\gcd \left( u_{m},v_{m} \right)=2$. This, combined with
$u_{m}v_{m} \in 3\rS$ shows that either $u_{m}$ or $v_{m}$ belongs to $2\rS$.
From parts~(b)
and (d) of Lemma~\ref{lem:RibMcDan}, it follows that $m \in \{3,6\}$.
Once again, we complete these remaining cases at the end of the proof.

Next, consider the case $u_{n} \in 6\rS$.
If $3 \nmid m$, then, from $u_{m}v_{m}\in 6\rS$
and $\gcd \left( u_{m},v_{m} \right)=1$, we have to consider the following four cases.

\noindent
(I) $u_{m} \in 2\rS$ and $v_{m} \in 3\rS$. The first inclusion is impossible by
Lemma~\ref{lem:RibMcDan}(b).

\noindent
(II) $u_{m} \in 3\rS$ and $v_{m} \in 2\rS$. The second inclusion is impossible by
Lemma~\ref{lem:RibMcDan}(d).

\noindent
(III) $u_{m} \in 6\rS$ and $v_{m} \in\rS$. The second inclusion implies $m=5$ by
Lemma~\ref{lem:RibMcDan}(c).

\noindent
(IV) $u_{m} \in \rS$ and $v_{m} \in 6\rS$. The first inclusion implies $m=2$ by
Lemma~\ref{lem:RibMcDan}(a).
If $3\mid m$, then $\gcd \left( u_{m},v_{m} \right)=2$. If we combine this with
$u_{m}v_{m}\in 6\rS$, then we see that one of the following four cases must hold.

\noindent
(I) $u_{m}\in\rS$ and $v_{m}\in 6\rS$. The first inclusion implies $m\in\{3,6,12\}$ by
Lemma~\ref{lem:RibMcDan}(a).

\noindent
(II) $u_{m}\in 6\rS$ and $v_{m}\in\rS$. The second inclusion implies $m=3$ by
Lemma~\ref{lem:RibMcDan}(c).

\noindent
(III) $u_{m}\in 3\rS$ and $v_{m}\in 2\rS$. The second inclusion implies $m\in\{3,6\}$ by
Lemma~\ref{lem:RibMcDan}(d).

\noindent
(IV) $u_{m}\in 2\rS$ and $v_{m}\in 3\rS$. The first inclusion implies $m\in\{3,6\}$ by
Lemma~\ref{lem:RibMcDan}(b).

In all cases above, $2\leq m\leq 12$ which implies that $4\leq n\leq 24$.
A straightforward computation shows that $u_{n} \not\in 3\rS\cup 6\rS$ 
for every $n=4,\ldots,24$.
\end{proof}

The following result is used very often in our arguments in 
Subsections~\ref{subsec:u_2n a odd and n=0 mod 3} 
through~\ref{subsec:a odd and n=1 mod 3}. Part~(a) is the main result of
Mignotte-Peth\H{o} \cite{MP}.

\begin{proposition}
\label{prop:mp}
\emph{(a)} Let $a\geq 4$ and $N\geq 4$. Then $u_{N}(a)\in\rS$ is possible only 
when $a=338$ and $N=4$. For $c\in\{2,3,6\}$, $u_{N}(a)\in c\rS$ is impossible.

\emph{(b)} Let $a=3$. Then $u_{N}(3)\in\rS$ only when $N=6$; $u_{N}(3)\in 2\rS$ only 
when $N=3$. For $c\in\{3,6\}$, $u_{N}(3)\in c\rS$ is impossible for every $N\geq 3$

\emph{(c)} Let $N=3$. Then $u_{3}(a)\in\rS$ is impossible for every $a\geq 3$.
For each $c \in \{2,3,6\}$, there are infinitely many $a \geq 4$ such that
$u_{3}(a)\in c\rS$ holds.
\end{proposition}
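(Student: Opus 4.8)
The plan is to obtain all three parts by assembling results that are already on the table, since the real content of part~(a) is precisely the theorem of Mignotte--Peth\H{o}. For part~(a), with $a\geq 4$ and $N\geq 4$ we are exactly in the range $a>3$, $n>3$ covered by \cite{MP}: that theorem says $u_{N}(a,-1)$ lies in $\rS\cup 2\rS\cup 3\rS\cup 6\rS$ only for $(a,N)=(338,4)$. It then remains to record one numerical check, namely that $u_{4}(338)=338(338^{2}-2)=(2\cdot 13\cdot 239)^{2}$, so that the exceptional term is a genuine square and is \emph{not} two, three or six times a square. Thus part~(a) needs nothing beyond quoting \cite{MP} and this single computation.

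For part~(b) we take $a=3$, $b=-1$, which are coprime and odd with $a^{2}+4b=5>0$, so Proposition~\ref{prop RibMcDan} is available. Its part~(a) confines $u_{N}(3)\in\rS$ to $N\in\{1,2,3,6,12\}$ and its part~(b) confines $u_{N}(3)\in 2\rS$ to $N\in\{3,6\}$; intersecting with $N\geq 3$ and then simply evaluating $u_{3}(3)=8$, $u_{6}(3)=144$, $u_{12}(3)=46368$ leaves $N=6$ in the first case and $N=3$ in the second. (It is worth noting that $u_{n}(3)=F_{2n}$, so this is just the well-known classification of squares and twice-squares among even-index Fibonacci numbers.) Finally, the assertion that $u_{N}(3)\notin 3\rS\cup 6\rS$ for all $N\geq 3$ is exactly Proposition~\ref{prop u_{n}(3)}, already established above.

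Part~(c) is the only genuinely new ingredient, and it is elementary. Since $u_{3}(a)=a^{2}-1$ and, for $a\geq 2$, one has $(a-1)^{2}<a^{2}-1<a^{2}$, the value $u_{3}(a)$ is strictly between consecutive squares, so $u_{3}(a)\notin\rS$ for every $a\geq 3$. For $c\in\{2,3,6\}$ the relation $u_{3}(a)\in c\rS$ reads $a^{2}-cx^{2}=1$; as none of $2,3,6$ is a perfect square this Pell equation has infinitely many positive solutions $(a,x)$, all but finitely many with $a\geq 3$, which yields infinitely many $a\geq 3$ with $u_{3}(a)\in c\rS$ (small explicit ones being $(a,c)=(3,2)$, $(7,3)$, $(5,6)$).

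There is essentially no analytic difficulty here. The deep input---the resolution of $u_{N}(a,-1)\in c\rS$ for $c\in\{1,2,3,6\}$---is contained in \cite{MP} for $a\geq 4$ and, for $a=3$, in the Ribenboim--McDaniel results packaged in Proposition~\ref{prop RibMcDan} together with Proposition~\ref{prop u_{n}(3)}. The only care required is bookkeeping: matching the hypotheses of \cite{MP} to the present statement, carrying out the handful of numerical verifications (for small $N$ in part~(b) and for $(338,4)$ in part~(a)), and observing that the Pell equations in part~(c) are non-degenerate.
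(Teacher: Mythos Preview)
Your proposal is correct and follows essentially the same route as the paper: part~(a) is attributed to \cite{MP}, part~(b) combines Proposition~\ref{prop RibMcDan}(a),(b) with direct evaluation of $u_{3}(3),u_{6}(3),u_{12}(3)$ and invokes Proposition~\ref{prop u_{n}(3)} for $c\in\{3,6\}$, and part~(c) is handled via $u_{3}(a)=a^{2}-1$. You are in fact slightly more thorough than the paper's own proof, which simply says ``This is the main theorem of \cite{MP}'' for part~(a) without recording the check that $u_{4}(338)=(2\cdot 13\cdot 239)^{2}\in\rS\setminus(2\rS\cup 3\rS\cup 6\rS)$, and which leaves the Pell argument in part~(c) entirely implicit.
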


\begin{proof}
(a) This is the main theorem of \cite{MP}.

(b) Lemma~\ref{lem:u_n(3)} implies that $u_{N}(3)\not\in 3\rS\cup 6\rS$ for
every $N\geq 3$. Suppose that $u_{N}(3)\in \rS\cup 2\rS$. 
Then, by Lemma~\ref{lem:RibMcDan}(a) and (b), it follows that 
$N\in\{3,6,12\}$. 
We have $u_{3}(3)=2^{3}$. Therefore, $u_{3}(3)\in 2\rS$; $u_{6}(3)=2^{4} \cdot 3^{2}$.
Hence $u_{6}(3)\in\rS$; 
$u_{12}(3)=2^{5} \cdot 3^{2} \cdot 161$, and so $u_{12}(3)\not\in \rS\cup 2\rS$. 

(c) The proof is immediate from the theory of Pell equations since $u_{3}(a)=a^{2}-1$.
\end{proof}

\section{$u_{2n}(a)\in p\rS$ for $a \geq 3$, $n\geq 3$ and $p \geq 5$}
\label{sec:u_2n}

This section is devoted to the proof of the following result.

\begin{proposition}
\label{prop:u_2n in pS}
$u_{N}(a) \in p\rS$ is impossible with $a \geq 3$, $N=2n \geq 6$ and prime $p \geq 5$.
\end{proposition}

We will prove this proposition by applying 
Lemmas~\ref{lem:u_2n not in pS, a odd, n=0(mod3)},
\ref{lem:u_2n a and n even} and \ref{lem:u_2n in pS => u_m in pS}, which 
are stated and proved in Subsections~\ref{subsec:u_2n a odd and n=0 mod 3},
\ref{subsec:u_2n a,n even}, and~\ref{subsec:u_2n gcd(u_n,v_n)=1}, respectively.

\emph{
In the remainder of this section, we will always assume that $a\geq 3$, $n\geq 3$
and $p$ is a prime $\geq 5$. No further mention of these assumptions will be
made in this section.}

Recall that $u_{2n}(a)=u_{n}(a)v_{n}(a)$ and, from Lemma~\ref{lem:gcd}(a) that
$\gcd \left( u_{n}(a),v_{n}(a) \right)=1,2$.

The two cases when $\gcd \left( u_{n}(a),v_{n}(a) \right)=2$
are treated in separate subsections 
(Subsections~\ref{subsec:u_2n a odd and n=0 mod 3} and \ref{subsec:u_2n a,n even}
respectively) and the two cases when $\gcd \left( u_{n}(a),v_{n}(a) \right)=1$
are considered together in Subsection~\ref{subsec:u_2n gcd(u_n,v_n)=1}.

\subsection{$a$ is odd and $n \equiv 0 \pmod{3}$}
\label{subsec:u_2n a odd and n=0 mod 3}

The purpose of this subsection is to prove the following result.

\begin{lemma}
\label{lem:u_2n not in pS, a odd, n=0(mod3)}
If $a$ is odd and $n\equiv 0 \pmod{3}$, then $u_{2n}(a) \in p\rS$ is impossible.
\end{lemma}

This lemma follows immediately from the two lemmas below.

\begin{lemma}
\label{lem:2n a odd n=0 mod3 n even}
If $a$ is odd, $n$ is even and $n\equiv 0 \pmod{3}$, then 
$u_{2n}(a) \in p\rS$ is impossible. 
\end{lemma}

\begin{proof}
Write $2n=4m$, where $3 \mid m$. Now $u_{4m}(a) \in p\rS$ implies 
$u_{2m}(a)v_{2m}(a) \in p\rS$, where $\gcd \left( u_{2m}(a), v_{2m}(a) \right)=2$
in view of the assumptions on $a$ and $m$. Therefore, either 
$\left( u_{2m}(a) \in 2\rS \right.$ and $\left. v_{2m}(a) \in 2p\rS \right)$
or $\left( u_{2m}(a) \in 2p\rS \right.$ and $\left.v_{2m}(a) \in 2\rS \right)$.
By Proposition~\ref{prop:mp}, $u_{2m}(a) \in 2\rS$ is impossible. Hence
we are left with 
\begin{equation}
\label{eq:u_{2}m in 2pS and v_{2}m in 2s}
u_{2m}(a) \in 2p\rS \quad \text{and} \quad v_{2m}(a) \in 2\rS.
\end{equation}

From the first relationship in \eqref{eq:u_{2}m in 2pS and v_{2}m in 2s}, we
have $u_{m}(a)v_{m}(a) \in 2p\rS$. Since $a$ is odd and $3 \mid m$, we conclude
that $\gcd \left( u_{m}(a), v_{m}(a) \right)=2$ and $v_{m}(a) \equiv 2 \pmod{4}$. 
Therefore $u_{m}(a) \cdot \frac{1}{2}v_{m}(a) \in p\rS$ and the factors on the 
left-hand side are relatively prime. Consequently, either 
$\left( u_{m}(a) \in \rS \right.$ and $\left. v_{m}(a) \in 2p\rS \right)$ or
$\left( u_{m}(a) \in p\rS \right.$ and $\left. v_{m}(a) \in 2\rS \right)$.

\emph{Consider the first alternative:} $u_{m}(a) \in \rS$ and $v_{m}(a) \in 2p\rS$.
If $m>3$, then, by Proposition~\ref{prop:mp}, we must necessarily have $m=6$ and
$a=3$. But then,
$v_{m}(a)=v_{6}(3)=2\cdot 7\cdot 23$ which is not of the form
$2\cdot\text{prime}\cdot\text{square}$. If $m=3$, then $u_{3}(a) \in \rS$ is
equivalent to $a^{2}-1\in \rS$, which is impossible.

\emph{Consider the second alternative:} $u_{m}(a) \in p\rS$ and $v_{m}(a) \in 2\rS$.
Here, $v_{m}(a)=2x^{2}$ for a positive integer $x$. By the second relation in 
\eqref{eq:u_{2}m in 2pS and v_{2}m in 2s} we also have $v_{2m}(a)=2y^{2}$ for a
positive integer $y$. Using \eqref{eq:vn-closed} and $\alpha\beta=1$, we also
have the relation $v_{m}^{2}(a)-v_{2m}(a)=2$. Hence $2x^{4}-y^{2}=1$ and it is
well-known (see, for example, \cite[page~271]{Mor} and the references there)
that the only positive solutions to this equation are
$(x,y)=(1,1)$ and $(13,239)$. Thus, $v_{m}(a) \in \left\{ 2,\,2\cdot 13^{2} \right\}$.
From Remark~\ref{rem:increasing}, $v_{m}(a) \neq 2$ because $m\geq 3$. Next, consider 
$v_{m}(a) = 2 \cdot 13^{2}=338$. This is impossible for $m\geq 3$.
Indeed, using mathematical software like Maple or PARI/GP, it is easy to check
that $v_{m}(a)=338$ is impossible if $m\in\{3,4\}$ or $(m,a)=(5,3)$. If $m \geq 5$
and $a>3$, then $v_{m}(a) \geq v_{5}(a)=a^{5}-5a^{3}+5a\geq v_{5}(4)=724$, from
Remark~\ref{rem:increasing}.
\end{proof}

\begin{lemma}
\label{lem:2n a odd n=0 mod3 n odd}
If $a$ is odd, $n$ is odd and $n\equiv 0 \pmod{3}$, then $u_{2n}(a) \in p\rS$ is
impossible.
\end{lemma}

\begin{proof}
If $u_{2n}(a) \in p\rS$, then we have $u_{n}(a)v_{n}(a) \in p\rS$ and 
$\gcd \left( u_{n}(a), v_{n}(a) \right)=2$. Therefore, either 
$\left( u_{n}(a) \in 2\rS \right.$ and $\left. v_{n}(a) \in 2p\rS \right)$ or
$\left( u_{n}(a) \in 2p\rS \right.$ and $\left. v_{n}(a) \in 2\rS \right)$.

\emph{Consider the first alternative:} $u_{n}(a) \in 2\rS$ and $v_{n}(a) \in 2p\rS$.
If $n>3$, then $u_{n}(a) \in 2\rS$ is impossible by Proposition~\ref{prop:mp}.
Therefore $n=3$ and 
\begin{equation}
\label{eq:u3=2x^2 and v3=2py^2}
u_{3}(a)=a^{2}-1=2x^{2},\quad \text{and} \quad v_{3}(a)=a^{3}-3a=2py^{2}.
\end{equation}

By the second equation, we have $a\left( a^{2}-3 \right)=2py^{2}$.

If $3 \nmid a$, then $\gcd \left( a,a^{2}-3 \right)=1$. Also, $a$ is odd. Therefore,
either $\left( a=py_{1}^{2} \right.$ and $\left. a^{2}-3=2y_{2}^{2} \right)$ or
$\left( a=y_{1}^{2} \right.$ and $\left. a^{2}-3=2py_{2}^{2} \right)$.

In the first case, combining the first equation 
\eqref{eq:u3=2x^2 and v3=2py^2} with $a^{2}-3=2y_{2}^{2}$, we obtain 
$x^{2}-y_{2}^{2}=1$, which forces
$x=1,y_{2}=0$ and $a^{2}=3$, which is impossible.

In the second case, combining the first equation~\eqref{eq:u3=2x^2 and v3=2py^2}
with $a=y_{1}^{2}$ gives $y_{1}^{4}-1=2x^{2}$, which defines an elliptic curve
of rank $0$. Applying the {\tt IntegralQuarticPoints} function of 
{\sc magma}\cite{magma}\footnote{This routine is an implementation of the method
given in \cite{Tz}.} we find that the only solutions are 
$\left( x,y_{1} \right) = (0,\pm 1)$ leading to $a=1$, which we reject.

If $3 \mid a$, then we put $a=3c$ with $c$ odd, and the 
equations~\eqref{eq:u3=2x^2 and v3=2py^2} become
\begin{equation}
\label{eq:a=3c}
9c^{2}-1=2x^{2},\quad \text{and} \quad c \left( 3c^{2}-1 \right) = 2p(y/3)^{2},
\end{equation}
where, the factors in the left-hand side of the second equation are
relatively prime. Therefore, either $c \in \rS$ or $3c^{2}-1 \in 2\rS$.

In the first case ($c \in \rS$), put $c=z^{2}$ and combine with $9c^{2}-1=2x^{2}$ to obtain
$9z^{4}-1=2x^{2}$. This defines an elliptic curve of rank $1$. Applying the
{\sc magma} function {\tt IntegralQuarticPoints}, we see that the only positive integer
solution is $(c,x)=(1,2)$. But this must be rejected because
it does not satisfy the second of the equations in \eqref{eq:a=3c}.

In the second case (i.e., $3c^{2}-1 \in 2\rS$), we put $3c^{2}-1=2z^{2}$. But we also have 
$9c^{2}-1=2x^{2}$ so, by multiplying these equations we obtain
$\left( 9c^{2}-1 \right) \left( 3c^{2}-1 \right)=(2xz)^{2}$. The equation
$\left( 9x^{2}-1 \right) \left( 3x^{2}-1 \right)=Y^{2}$ defines an elliptic
curve of rank $1$. As before, applying the {\sc magma} function, {\tt IntegralQuarticPoints},
we see that the only integer solution of $\left( 9c^{2}-1 \right) \left( 3c^{2}-1 \right)=(2xz)^{2}$ 
with positive $c$ is $(c,2xz)=(1,4)$. But $c=1$ is rejected as we saw before.

\emph{Consider the second alternative:}
$u_{n}(a) \in 2p\rS$ and $v_{n}(a) \in 2\rS$. We put $n=3m$, where $m$ is odd,
so that $v_{3m}(a) \in 2\rS$. From \eqref{eq:vn-closed} and $\alpha\beta=1$, we
have the identity $v_{3m}(a)=v_{m}^{3}(a)-3v_{m}(a)$. Hence it follows that
$v_{m}(a) \left( v_{m}^{2}(a)-3 \right) \in 2\rS$.

If $3 \nmid v_{m}(a)$, then $\gcd \left( v_{m}(a), v_{m}^{2}(a) - 3 \right)=1$ so, 
either $\left( v_{m}(a) \in 2\rS \right.$ and $\left. v_{m}^{2}(a)-3 \in \rS \right)$ or 
$\left( v_{m}(a) \in \rS \right.$ and $\left. v_{m}^{2}(a)-3 \in 2\rS \right)$.
The second alternative implies $v_{m}(a)=x^{2}$ and $x^{4}-3=2y^{2}$ for some 
$x,y\in\bbZ$, with the last equation being impossible $\bmod \, 9$. 
The first alternative implies $v_{m}(a)=2x^{2}$ and $4x^{4}-3=y^{2}$,
with $x,y$ positive integers. The last equation defines a rank $1$
elliptic curve and the {\sc magma} function {\tt IntegralQuarticPoints} returns $(x,y)=(1,1)$
as the only solution with positive $x,y$.\footnote{Alternatively, since
the coefficient of $x^{4}$ is a square, one can solve the equation by 
elementary means, following the method of \cite{Pou}.\label{foot:Poulakis}} 
Therefore, $v_{m}(a)=2$. But, by Remark~\ref{rem:increasing}, for every $a \geq 3$ and every 
$k>0$ it is true that $v_{k}(a) \geq v_{1}(a)=a>2$. Hence $m=0$ and, consequently, $n=3m=0$, 
a contradiction. 

If $3 \mid v_{m}(a)$, then we put $v_{m}(a)=3x$, so that $x \left( 3x^{2}-1 \right) \in 2S$.
Since $3x^{2}-1 \in \rS$ is impossible $\bmod\,3$, we have $x \in \rS$ and
$3x^{2}-1 \in 2\rS$.
We put $x=x_{1}^{2}$ and $3x^{2}-1=2y^{2}$, so that $3x_{1}^{4}-1=2y^{2}$, which
defines an elliptic curve of rank $1$.
Using the {\sc magma} function {\tt IntegralQuarticPoints} we find that the only solutions with
positive $x_{1}$ are $\left( x_{1},y \right)=(1,1),(3,11)$; consequently $v_{m}(a)/3=x\in\{1,9\}$.
Assume $v_{m}(a)=3$. If $m \geq 3$, then
$3=v_{m}(a) \geq v_{3}(a) \geq v_{3}(3)=18$, contradiction. If $m=1$ (hence $n=3$), 
then $3=v_{m}(a)=v_{1}(a)=a$ and $u_{n}(a)=u_{3}(a)=u_{3}(3)=8 \not\in 2p\rS$.
Assume $v_{m}(a)=27$. If $m=1$ (hence $n=3$), then $27=v_{1}(a)=a$ and 
$u_{n}(a)=u_{3}(27)=728\not\in 2p\rS$. If $m=3$, then $27=v_{3}(a)=a^{3}-3a$ which is 
impossible with $a\in\bbZ$. Finally, if $m \geq 5$, then
$27=v_{m}(a) \geq v_{5}(a) \geq v_{5}(3)=123$, contradiction.
\end{proof} 

\subsection{Both $a$ and $n$ are even}
\label{subsec:u_2n a,n even}

The aim of this subsection is to prove the following lemma, used in the proof
of Proposition~\ref{prop:u_2n in pS}.

\begin{lemma}
\label{lem:u_2n a and n even}
If both $a$ and $n$ are even, then $u_{2n}(a) \in p\rS$ is impossible.
\end{lemma}

\begin{proof}
We put $2n=2^{t}m$, where $t\geq 2$, $m$ is odd and, if $t=2$ then $m \geq 3$.
We have $u_{2n}(a)=u_{n}(a)v_{n}(a) \in p\rS$ and $\gcd \left( u_{n}(a), v_{n}(a) \right)=2$,
by Lemma~\ref{lem:gcd}. Therefore, either
\begin{equation}
\label{eq:u in 2S, v in 2pS}
u_{2^{t-1}m}(a) \in 2\rS \quad \text{and} \quad v_{2^{t-1}m}(a) \in 2p\rS
\end{equation}
or
\begin{equation}
\label{eq:u in 2pS, v in 2S}
u_{2^{t-1}m}(a) \in 2p\rS \quad \text{and} \quad v_{2^{t-1}m}(a) \in 2\rS.
\end{equation}

Since $2^{t-1}m>3$, the first equation~\eqref{eq:u in 2S, v in 2pS} is 
impossible by Proposition~\ref{prop:mp}.

Now we consider \eqref{eq:u in 2pS, v in 2S}. The first equation is
written 
\begin{equation}
\label{eq:uv in 2pS}
u_{2^{t-2}m}(a)v_{2^{t-2}m}(a) \in 2p\rS.
\end{equation}

If $t=2$, then $m\geq 3$ and $u_{m}(a)v_{m}(a) \in 2p\rS$, where the two factors in 
left are relatively prime (cf.~\eqref{eq:gcd u_{n},v_{n}}) and $u_{m}(a)$ is odd.
Thus, either $\left( u_{m}(a) \in \rS \right.$ and $\left. v_{m}(a) \in 2p\rS \right)$ or
$\left( u_{m}(a) \in p\rS \right.$ and $\left. v_{m}(a) \in 2\rS \right)$.

Consider the first alternative. If $m>3$, then, since $m$ is odd, 
$u_{m}(a) \in \rS$ is impossible by Proposition~\ref{prop:mp}. If $m=3$,
then $u_{3}(a) \in \rS$ is equivalent to $a^{2}-1 \in \rS$ which is impossible for 
$a>1$.

Consider the second alternative. Now $v_{m}(a) \in 2\rS$ and, by 
\eqref{eq:u in 2pS, v in 2S}, we already have $v_{2m}(a) \in 2\rS$. 
In the proof of Lemma~\ref{lem:2n a odd n=0 mod3 n even} we have shown
(without using the parity of $a$) that $v_{m}(a) \in 2\rS$ and $v_{2m}(a) \in 2\rS$, with $m \geq 3$,
cannot hold simultaneously. 

If $t>2$ then we go back to \eqref{eq:uv in 2pS}, where now the $\gcd$
of the two factors in the left is 2 and $v_{2^{t-2}m}(a) \equiv 2 \pmod{4}$.
Therefore $u_{2^{t-2}m}(a) \cdot \frac{1}{2}v_{2^{t-2}m}(a) \in p\rS$ and the
factors in the left are relatively prime. Consequently, either
$\left( u_{2^{t-2}m}(a) \in p\rS \right.$ and $\left. v_{2^{t-2}m}(a) \in 2\rS \right)$ or
$\left( u_{2^{t-2}m}(a) \in \rS \right.$ and $\left. v_{2^{t-2}m}(a) \in 2p\rS \right)$.

Consider the first alternative. Then $v_{2^{t-2}m}(a) \in 2\rS$ and, by
\eqref{eq:u in 2pS, v in 2S}, $v_{2^{t-1}m}(a) \in 2\rS$. We put
$2^{t-2}m=k$, so that $k \geq 2$ is even, $v_{k}(a)=2x^{2}$ and $v_{2k}(a)=2y^{2}$.
The case $k\geq 3$ is excluded by appealing again to the proof of 
Lemma~\ref{lem:2n a odd n=0 mod3 n even}. The case $k=2$ implies
$t=3, m=1$ and, by \eqref{eq:u in 2pS, v in 2S}, $u_{4}(a) \in 2p\rS$ and
$v_{4}(a) \in 2\rS$. The last relation is equivalent to $a^{4}-4a^{2}+2=2y^{2}$ 
(note that the left-hand side is $v_{4}(a)$) and, by applying {\tt IntegralQuarticPoints} of
{\sc magma}, we see that the only positive integer solution is 
$(a,y)=(2,1)$, which is rejected since we have assumed $a>2$.

Consider the second alternative. Then $u_{2^{t-2}m}(a) \in \rS$. Since we
have assumed $t>2$, we have either $2^{t-2}m=2$ or $2^{t-2}m>3$.
If $2^{t-2}m=2$, then $t=3,m=1$, so that, by \eqref{eq:u in 2pS, v in 2S},
$v_{4}(a) \in 2\rS$. But a few lines above we saw that $v_{4}(a) \in 2\rS$ is impossible
for $a>2$.
If $2^{t-2}m>3$, then, since $a$ is even, Proposition~\ref{prop:mp} implies that 
$2^{t-2}m=4$ and $a=338$. However, it is straightforward 
to check that $v_{4}(338) \not\in 2p\rS$.
\end{proof}

\subsection{Either $a$ is even and $n$ is odd, or $a$ is odd and $3\nmid n$}
\label{subsec:u_2n gcd(u_n,v_n)=1}

Our purpose in this subsection is to prove 
Lemma~\ref{lem:u_2n in pS => u_m in pS}. As we wrote above, this lemma
is necessary for the proof of Proposition~\ref{prop:u_2n in pS}.	

\begin{lemma}
\label{lem:u_2n in pS => u_{n} in pS}
If $u_{2n}(a) \in p\rS$, where $n \geq 3$ with $a$ and $n$ as in the title of
\emph{Subsection~\ref{subsec:u_2n gcd(u_n,v_n)=1}}, then $u_{n}(a) \in p\rS$
and $v_{n}(a) \in \rS$.
\end{lemma}

\begin{proof}
We have $u_{n}(a)v_{n}(a)=u_{2n}(a) \in p\rS$ and $\gcd \left( u_{n}(a), v_{n}(a) \right)=1$.
Therefore, either $\left( u_{n}(a) \in \rS \right.$ and $\left. v_{n}(a) \in p\rS \right)$
or $\left( u_{n}(a) \in p\rS \right.$ and $\left. v_{n}(a) \in \rS \right)$, and
we have to show that the first alternative is impossible. 
If $n>3$, then, by Proposition~\ref{prop:mp} and $u_{n}(a) \in \rS$, we conclude
that $(n,a) \in \{ (4,338),\,(6,3) \}$. But, according to our assumption, if
$a$ is even, $n$ must be odd, and if $a$ is odd, $n$ must be indivisible by $3$.
Therefore both possibilities for $(a,n)$ are excluded and, consequently, we must
reject $n>3$.
If $n=3$, then $u_{3}(a) \in \rS$ means $a^{2}-1 \in \rS$, which is impossible
for $a>1$.
\end{proof}

\begin{lemma}
\label{lem:u_2n in pS => u_m in pS}
If $u_{2n}(a) \in p\rS$, where $n \geq 3$ with $a$ and $n$ as in the title of
\emph{Subsection~\ref{subsec:u_2n gcd(u_n,v_n)=1}}.
Then $n$ has odd divisors greater than $1$, and if $m>1$ is the largest odd divisor
of $n$, then $u_{m}(a) \in p\rS$ and $v_{m}(a) \in \rS$.
\end{lemma}

\begin{proof}
We first show that $n$ cannot be equal to a power of $2$. Assume the contrary
and put $2n=2^{t}$, where $t \geq 3$. Before proceeding, we observe that
$v_{4}(a) \in \rS$ is impossible, because
it is equivalent to $a^{4}-4a^{2}+2=x^{2}$, which is impossible $\bmod\,3$.
We have $u_{2\cdot 2^{t-1}}(a)=u_{2^{t}}(a) \in p\rS$ and $2^{t-1} \geq 4$.
Therefore, by Lemma~\ref{lem:u_2n in pS => u_{n} in pS}, 
$u_{2^{t-1}}(a) \in p\rS$ and $v_{2^{t-1}}(a) \in \rS$.
If $t=3$, then $v_{4}(a) \in \rS$, which is impossible. If $t \geq 4$, then,
$u_{2\cdot 2^{t-2}}(a)=u_{2^{t-1}}(a) \in p\rS$ and $2^{t-1} \geq 4$. It
follows by Lemma~\ref{lem:u_2n in pS => u_{n} in pS} that 
$u_{2^{t-2}}(a) \in p\rS$ and $v_{2^{t-2}}(a) \in \rS$. If $t=4$, then
$v_{4}(a) \in \rS$, which is impossible. If $t>4$, we repeat the above argument
so that, after a finite number of steps we obtain the impossible relation
$v_{4}(a) \in \rS$. 
This proves the first claim of the lemma. 

To prove the second claim of the lemma, we put $2n=2^{t}m$, where $t \geq 1$ 
and $m>1$ is odd.
If $a$ is even, then, by hypothesis, $n$ is odd. Hence $t=1, n=m$, and 
Lemma~\ref{lem:u_2n in pS => u_{n} in pS} imply that $u_{m}(a) \in p\rS$
and $v_{m}(a) \in \rS$.
Next, assume that $a$ is odd. By hypothesis, $3\nmid n$. Thus $3\nmid m$.
We have $u_{2\cdot 2^{t-1}m}(a) \in p\rS$. Hence, by 
Lemma~\ref{lem:u_2n in pS => u_{n} in pS}, $u_{2^{t-1}m}(a) \in p\rS$ and 
$v_{2^{t-1}m}(a) \in \rS$. If $t=1$ we are done. If $t>1$, then 
$u_{2\cdot 2^{t-2}m}(a) \in p\rS$ and Lemma~\ref{lem:u_2n in pS => u_{n} in pS}
implies that $u_{2^{t-2}m}(a) \in p\rS$ and $v_{2^{t-2}m}(a) \in \rS$. If
$t=2$, then we are done. If $t>2$, we repeat the argument until we arrive at
$u_{m}(a) \in p\rS$ and $v_{m}(a) \in \rS$.
\end{proof}

\subsection{Proof of Proposition~\ref{prop:u_2n in pS}}
\label{subsec:prop u_2n in pS proof}

From Lemma~\ref{lem:u_2n a and n even}, we see that either $a$ or $n$ is odd.

If $a$ is odd, then, from Lemma~\ref{lem:u_2n not in pS, a odd, n=0(mod3)},
$n \not\equiv 0 \pmod{3}$. From Lemma~\ref{lem:u_2n in pS => u_m in pS},
$n$ must have odd divisors greater than $1$ and if we let $m$ be the largest such
odd divisor of $n$, then $u_{m}(a) \in p\rS$ and $v_{m}(a) \in \rS$. 
By Lemma~\ref{lem:RibMcDan}(c) (in the notation of which $b=-1$ in our case), $v_{m}(a) \in \rS$ 
implies that $m=3$ or $m=5$.
Since $n \not\equiv 0 \pmod{3}$, the only possibility is $m=5$. 
But for $m=5$, Lemma~\ref{lem:RibMcDan}(c) says that $-b\equiv 5 \pmod{8}$,
a contradiction because, in our case $-b=1$. 
Thus, if $a$ is odd, then $u_{2n}(a)\in p\rS$ is not possible.

If $n$ is odd, then, by what we proved above, $a$ is even and we can again use 
Lemma~\ref{lem:u_2n in pS => u_m in pS} to conclude that there exists
an odd $m>1$ such that $v_m(a)\in\rS$. 
As in the previous paragraph, we see that $m=5$ is not possible. If $m=3$, then
$-b \equiv 3 \pmod{4}$ by Lemma~\ref{lem:RibMcDan}(c). But in our case,
$-b=1$ and we get a contradiction. 
Hence if $n$ is odd, then $u_{2n}(a)\in p\rS$ is not possible.

\section{$u_{2n+1}(a)\in p\rS$ for $a \geq 3$, $n \geq 2$ and $p \geq 5$}
\label{sec:u_2n+1 in pS}

\emph{
Throughout this section, we always assume that $a\geq 3$, $n\geq 2$ and the
prime $p$ satisfies $p \geq 5$. No further mention of these assumptions will be
made in this section.}

\begin{lemma}
\label{lem:t_n or w_n must be square}
Assume that either $a$ is even or $(a$ is odd and $n \not\equiv 1\pmod{3})$.
If $u_{2n+1}(a)\in p\rS$, then either $w_{n}(a)$ or $t_{n}(a)$ is a square $>1$.
\end{lemma}

\begin{proof}
From relation~\eqref{eq:u=t*w}, we have $u_{2n+1}(a) = t_{n}(a)w_{n}(a)\in p\rS$.
Under our assumptions on $a$, Lemma~\ref{lem:gcd}(b) implies that 
$\gcd \left( t_{n}(a), w_{n}(a) \right)=1$,
Hence either $w_{n}(a)$ or $t_{n}(a)$ is a square which, moreover,
is $>1$ because of the assumptions that $a\geq 3$ and $n\geq 2$.
\end{proof}

In the remainder of this section, our treatment of 
\begin{equation}
\label{eq:u_2n+1 in pS}
u_{2n+1}(a) \in p\rS
\end{equation}
will be as follows.

If $a$ is odd and $n \equiv 1 \pmod{3}$, then we will show that \eqref{eq:u_2n+1 in pS}
is impossible in Subsection~\ref{subsec:a odd and n=1 mod 3}
(see Lemma~\ref{lem:2n+1 n=1(mod 3) a odd}).

If $a$ is odd and $n\not\equiv 1 \pmod{3}$, then we will show that \eqref{eq:u_2n+1 in pS}
is impossible in Subsubsection~\ref{subsubsec:a_odd_n_not_1_mod3}
(see Proposition~\ref{prop:2n+1 n<>1(mod 3) a odd}).

As a consequence of the above two statements, we have the following conclusion
for odd $a$.

\begin{proposition}
\label{prop:u_2n+1 not square with odd a}
$u_{2n+1}(a)\in p\rS$ is impossible with odd $a$.	
\end{proposition}

If $a$ is even, then, by Lemma~\ref{lem:t_n or w_n must be square}, either
$w_{n}(a) \in \rS$ or $t_{n}(a) \in \rS$. By Lemma~\ref{lem:w_n in S},
$w_{n}(a) \in \rS$ is impossible.
Thus, we are left with $t_{n}(a)\in\rS$ when $a$ is even. The study of this relation is done 
in Subsubsection~\ref{subsubsec:a_even}, where we prove 
Lemma~\ref{lem:t_n square,general a}. As a consequence of that proposition
and the fact that
$w_{n}(a)\in\rS$ is impossible, we conclude that, if $a$ is even, then 
\eqref{eq:u_2n+1 in pS} is possible only if $a \equiv 2 \pmod{4}$, and even more: For 
$a \equiv 2 \pmod{4}$ there exists at most one $n$ such that \eqref{eq:u_2n+1 in pS} is true and,
if such an $n$ actually exists, then it is odd, $\geq 9$ and $2n+1$ is a prime.


\subsection{$a$ is odd and $n\equiv 1 \pmod{3}$}
\label{subsec:a odd and n=1 mod 3}

The aim of this subsection is to prove the following lemma.

\begin{lemma}
\label{lem:2n+1 n=1(mod 3) a odd}
If $a $ is odd and $n \equiv 1 \pmod{3}$, then $u_{2n+1}(a) \in p\rS$ is impossible.
\end{lemma}

\begin{proof} 
Assume that our claim is not true and let $2n+1$ be the least odd multiple of $3$, 
strictly greater than $3$, such that $u_{2n+1}(a) \in p\rS$.

Instead of treating this equation using the relation $u_{2n+1}(a)=t_{n}(a)w_{n}(a)$, we
write $2n+1=3m$, so that 
$u_{2n+1}(a)=u_{3m}(a)=u_{m}(a) \left( v_{m}(a)+1 \right) \left( v_{m}(a)-1 \right)$
(note that, by our assumption, $m$ is odd $>1$). Therefore, $u_{2n+1}(a) \in p\rS$
becomes
\begin{equation}
\label{eq: u_m(v_m+1)(v_m-1)=px^{2}}
u_{m}(a) \left( v_{m}(a)+1 \right) \left( v_{m}(a)-1 \right) \in p\rS.
\end{equation}

We have
\begin{align}
\label{eq: gcd of u_m,v_m+1,v_m-1}
& \left( \gcd \left( u_{m}(a), v_{m}(a)+1 \right), \gcd \left( u_{m}(a), v_{m}(a)-1 \right),
         \gcd \left( v_{m}(a)+1, v_{m}(a)-1 \right) \right) \\
& =
\begin{cases}
   (1,1,2) & \text{if $m\not\equiv 0\pmod{3}$} \\
   (1,1,1) & \text{if $m\equiv 0\pmod{3}$ and $a\equiv 0\pmod{3}$} \\
   (1,3,1) & \text{if $m\equiv 0\pmod{3}$ and $a\equiv 1\pmod{3}$} \\
   (3,1,1) & \text{if $m\equiv 0\pmod{3}$ and $a\equiv 2\pmod{3}$}. 
\end{cases}
\nonumber
\end{align}

The proof is easy. The following two remarks are helpful.\\
(i) From \eqref{eq:un2-vn2-relationship}, we have $v_{m}^{2}(a)-\Delta u_{m}^{2}(a)=4$.
So $\left( v_{m}(a)+1 \right) \left( v_{m}(a)-1 \right) - \Delta u_{m}^{2}(a)=3$.
Therefore $\gcd \left( u_{m}(a), v_{m}(a) \pm 1 \right)=1$ or $3$.\\
(ii) If $a$ is odd and $3 \mid m$, then $v_{m}(a)$ is even. Therefore
$\gcd \left( v_{m}(a)+1,v_{m}(a)-1 \right)=1$.

\emph{If the first case of \eqref{eq: gcd of u_m,v_m+1,v_m-1} holds}, 
then, \eqref{eq: u_m(v_m+1)(v_m-1)=px^{2}}
is equivalent to 
$u_{m}(a) \cdot \frac{1}{2} \left( v_{m}(a)+1 \right) \cdot \frac{1}{2} \left( v_{m}(a)-1 \right) 
\in p\rS$,
where the three factors in the left-hand side are relatively prime in pairs. Therefore, 
exactly one out of the three belongs to $p\rS$ and each of the remaining two belongs 
to $\rS$. If any of the last two factors belongs to $p\rS$, then $u_{m}(a) \in \rS$, which 
implies, by Proposition~\ref{prop:mp}, that $(a,m) \in \{ (3,6),(338,4) \}$; both 
alternatives are rejected since both $a$ and $m$ are odd.
Therefore, both $\frac{1}{2} \left( v_{m}(a)-1 \right)$ and $\frac{1}{2} \left( v_{m}(a)+1 \right)$
belong to $\rS$. 
But then, $v_{m}(a)+1=2x_{1}^{2}$, $v_{m}(a)-1=2x_{2}^{2}$. 
Hence $x_{1}^{2}-x_{2}^{2}=1$.
This implies $x_{2}=0$ which is impossible, since $x_{2}^{2}=v_{m}(a)-1>0$.

\emph{If the second case of \eqref{eq: gcd of u_m,v_m+1,v_m-1} holds}, then, among 
the three factors in the left-hand side of \eqref{eq: u_m(v_m+1)(v_m-1)=px^{2}} exactly 
one belongs to $p\rS$ and each of the remaining two factors belongs to $\rS$. If any of 
the last two factors belongs to $p\rS$, then $u_{m}(a) \in \rS$, which is impossible as 
before.
Therefore, both $v_{m}(a) \pm 1$ belong to $\rS$. However, the simultaneous equations
$v_{m}(a)+1=x_{1}^{2}$, $v_{m}(a)-1=x_{2}^{2}$ imply $x_{1}^{2}-x_{2}^{2}=2$ which is 
impossible $\bmod\,4$.

\emph{If the third case of \eqref{eq: gcd of u_m,v_m+1,v_m-1} holds}, then, 
\eqref{eq: u_m(v_m+1)(v_m-1)=px^{2}} is equivalent to 
$\frac{1}{3}u_{m}(a) \cdot \left( v_{m}(a)+1 \right) \cdot \frac{1}{3} \left( v_{m}(a)-1 \right)
 \in p\rS$,
where the three factors in the left-hand side are relatively prime in pairs.
Therefore, exactly one out of the three belongs to $p\rS$ and each of the remaining 
two belongs to $\rS$.
If any of the last two factors belongs to $p\rS$, then
$\frac{1}{3}u_{m}(a) \in \rS$. Hence $u_{m}(a) \in 3\rS$. By Proposition~\ref{prop:mp}, $m=3$
and $2n+1=9$. Then, in view of our hypothesis, $u_9(a) \in p\rS$ which we write 
$\left( a^{4}-a^{3}-3a^{2}+2a+1 \right) \left( a^{4}+a^{3}-3a^{2}-2a+1 \right) \in p\rS$.
It is easy to see that $2$ is the $\gcd$ of the two factors. Hence 
\[
\frac{a^{4}-a^{3}-3a^{2}+2a+1}{2}\cdot
\frac{a^{4}+a^{3}-3a^{2}-2a+1}{2} \in p\rS
\]
and the two factors are relatively prime. It follows that exactly one of them belongs
to $\rS$ i.e. either $a^{4}-a^{3}-3a^{2}+2a+1\in 2\rS$ or 
$a^{4}+a^{3}-3a^{2}-2a+1\in 2\rS$.
Each of these relations defines an elliptic curve of rank $0$ and we find out that 
$(0,0)$ is the only integral point, which is rejected. 
Therefore, $\frac{1}{3}u_{m}(a) \in p\rS$ and 
$\frac{1}{3} \left( v_{m}(a)-1 \right), v_{m}(a)+1 \in \rS$. 
But then, $v_{m}(a)+1=x_{1}^{2}$ and $v_{m}(a)-1=3x_{2}^{2}$. 
Hence $x_{1}^{2}-3x_{2}^{2}=2$, which is impossible $\bmod\,3$.

\emph{If the fourth case of \eqref{eq: gcd of u_m,v_m+1,v_m-1} holds}, then, like in 
the third case, just above, we conclude that we simultaneously have $u_{m}(a) \in 3p\rS$, 
$v_{m}(a)+1 \in 3\rS$ and $v_{m}(a)-1 \in \rS$.
Now, the simultaneous equations $v_{m}(a)+1=3x_{1}^{2}$, $v_{m}(a)-1=x_{2}^{2}$ cannot be 
proved impossible by a straightforward argument, like in the previous three cases. 
Therefore we will argue differently. Before proceeding, we remark that
$v_{m}(a)-1=x_{2}^{2}$ cannot hold with $m=3$, because it is equivalent to 
$a^{3}-3a-1=x_{2}^{2}$, which has no integer solutions with $a>2$ (its only
integer solutions are given by $a=-1,2$). This can be seen by applying the
{\tt IntegralPoints} function of {\sc magma}. Therefore, $m>3$. Further, 
since we deal with the fourth case of \eqref{eq: gcd of u_m,v_m+1,v_m-1}, we have 
$a\equiv 2 \pmod{3}$ and $m=3k$, where both $a \geq 3$ and $k>1$ are odd. We have
$u_{m}(a)=u_{3k}(a) = u_{k}(a) \left( v_{k}(a)+1 \right) \left( v_{k}(a)-1 \right)$.
Hence,
\begin{equation}
\label{eq:u_k(v_k+1)(v_k-1) in pS}
u_{k}(a) \left( v_{k}(a)+1 \right) \left( v_{k}(a)-1 \right) \in 3p\rS.
\end{equation}

The left-hand side is similar to the left-hand side of 
\eqref{eq: u_m(v_m+1)(v_m-1)=px^{2}} with $k$ in place 
of $m$. Therefore, \eqref{eq: gcd of u_m,v_m+1,v_m-1} is still valid, with $k$ in place 
of $m$, but now, since $a\equiv 2 \pmod{3}$, we have to consider only the first and the 
fourth case. I.e. either 
\begin{align*}
\text{(i)}\; k \not\equiv 0 & \pmod{3} 
\\ 
     & \text{ and }
        \left( 
        \gcd ( u_{k}(a), v_{k}(a)+1 ), 
        \gcd ( u_{k}(a), v_{k}(a)-1 ), 
        \gcd ( v_{k}(a)+1, v_{k}(a)-1 ) 
        \right)=(1,1,2) \\
        \text{or}  \\
\text{(ii)}\; k \equiv 0  & \pmod{3} \\
    &  \text{ and }
      \left( 
      \gcd ( u_{k}(a), v_{k}(a)+1 ),
      \gcd ( u_{k}(a), v_{k}(a)-1 ), 
      \gcd ( v_{k}(a)+1, v_{k}(a)-1 ) 
      \right)=(3,1,1).
\end{align*}

If (i) holds, then $u_{k}(a) \not\equiv 0\pmod{3}$. 
We can write \eqref{eq:u_k(v_k+1)(v_k-1) in pS} as
$u_{k}(a) \cdot \frac{1}{2} \left( v_{k}(a)+1 \right) \cdot \frac{1}{2}
 \left( v_{k}(a)-1 \right) \in 3p\rS$
where the three factors in the left are pairwise co-prime. Therefore, $u_{k}(a) \in c\rS$,
where $c \in \{ 1,p \}$. But, since $k$ is odd, we must exclude $c=1$, in view of
Proposition~\ref{prop:mp}. Therefore $u_{k}(a) \in p\rS$. Also, since 
$a \equiv 2\pmod{3}$, we have $v_{k}(a)+1 \equiv 0 \pmod{3}$. Therefore 
$\frac{1}{2} \left( v_{k}(a)+1 \right)=3x_{1}^{2}$ and 
$\frac{1}{2} \left( v_{k}(a)-1 \right)=x_{2}^{2}$.
From the last two relations we obtain $1=3x_{1}^{2}-x_{2}^{2}$, which is impossible 
$\bmod\,3$.

If (ii) holds, then $u_{k}(a) \equiv 0 \pmod{3}$ and $v_{k}(a)+1 \equiv 3 \pmod{9}$ and 
$v_{k}(a)-1 \not\equiv 0\pmod{3}$.
We write \eqref{eq:u_k(v_k+1)(v_k-1) in pS}, 
$u_{k}(a) \cdot \frac{1}{3} \left( v_{k}(a)+1 \right) \cdot \left( v_{k}(a)-1 \right) \in p\rS$,
where the three factors in the left are pairwise co-prime. Consequently, 
$u_{k}(a) \in c\rS$, where $c \in \{1,p\}$. 
But, as before, $c=1$ is excluded by Proposition~\ref{prop:mp}.
Hence $u_{k}(a) \in p\rS$ and $k$ is an odd multiple of $3$.
If $k>3$, then we are in contradiction with the minimality of $2n+1$ (cf.~beginning of 
the proof) since $2n+1=3m=9k>k$. If $k=3$, then $u_{3}(a) \in p\rS$ and
$\frac{1}{3} \left( v_{3}(a)+1 \right) \in \rS$ and $v_{3}(a)-1 \in \rS$. The last relation
is equivalent to $a^{3}-3a-1=y^{2}$. As we have seen earlier in the proof, there
are no integer solutions $(a,y)$ with $a>2$.
\end{proof}

\subsection{Either $a$ is even or $n\not\equiv 1\pmod{3}$}
\label{subsec:a even or n=0,2 mod 3}

If $a$ is odd (hence $n \not\equiv 1 \pmod{3}$ here), the relation $u_{2n+1}(a)\in p\rS$
is impossible by Proposition~\ref{prop:2n+1 n<>1(mod 3) a odd} in 
Subsubsection~\ref{subsubsec:a_odd_n_not_1_mod3} below.
If $a$ is even, then, in view of Lemma~\ref{lem:t_n or w_n must be square},
the relation $u_{2n+1}(a)\in p\rS$ implies that either $t_{n}(a)$
or $w_{n}(a)$ is a square.
In the particular case of this subsection (namely, either $a$ is even or $n\not\equiv 1\pmod{3}$),
it is impossible for $w_{n}(a)$ to be a square in view of Lemma~\ref{lem:w_n in S},
which we prove below. Therefore, $u_{2n+1}(a)\in p\rS$ is reduced to $t_{n}(a)\in\rS$,
which (for even $a$) is treated in Subsubsection~\ref{subsubsec:a_even}. There
we prove Lemmas~\ref{lem:t_5=square}, \ref{lem:TogbeVoutierWalsh} and 
\ref{lem:t_n square,general a}.
This last lemma is essentially Theorem~\ref{thm:at most one solution}(iii).

%

In the proof of Lemma~\ref{lem:w_n in S} we will use the following three
lemmas.

\begin{lemma}
\label{lem:x^2-(c^2-4)y^2=-4}
For odd $c>3$, the equation 
\[
X^{2} - \left( c^{2}-4 \right)Y^{2} = -4
\]	
has no integer solutions in relatively prime (hence odd) $X,Y$.
\end{lemma}

\begin{proof}
If there is an integer solution $(X,Y)$, then $XY\neq 0$. Assume that $(X,Y)$ is
a positive solution. Since $4<\sqrt{c^{2}-4}$, it follows that $X/Y=p_{k}/q_{k}$
for some $k\geq 0$, where $p_{i}/q_{i}$ ($i=0,1,2,\ldots$) denote the convergents 
of the continued fraction expansion of $\sqrt{c^{2}-4}$ 
(see, for example, Theorem~3.3 of \cite{SolvePell}).
By hypothesis, $X$ and $Y$ are relatively prime, hence $X=p_{k}$ and $Y=q_{k}$.
Therefore, $p_{k}^{2} - \left( c^{2}-4 \right)q_{k}^{2}=-4$. We prove that this
is impossible as follows.
	
Applying Theorem~3.2.1(f) of \cite{MollinQuadratics} with $a=c$ and $r=-4$, we find that for odd 
$c>3$, the continued fraction expansion of $\sqrt{c^{2}-4}$ is
\[
\sqrt{c^{2}-4}=\left\langle c-1;\overline{1,\, (c-3)/2,\, 2,\, (c-3)/2,\, 1,\, 2c-2} \right\rangle.
\]

We have $p_{0}=c-1$ and $q_{0}=1$, so 
$p_{0}^{2} - \left( c^{2}-4 \right)q_{0}^{2}=-2c+5\neq -4$. The sequence
$\left( p_{k}^{2} - \left( c^{2}-4 \right)q_{k}^{2} \right)_{k\geq 1}$ is periodic
with period: $4,-c+2,4,-2c+5,1,-2c+5$.
See again Theorem~3.2.1(f) of \cite{MollinQuadratics}.
Therefore, $p_{k}^{2} - \left( c^{2}-4 \right) q_{k}^{2}\neq -4$.
\end{proof}

\begin{lemma}
\label{lem:x^2-(c^2-4)y^4=+4}
For odd $c \geq 3$, except for $(X,Y)=(c,1)$, there is at most one further positive solution
of the equation
\[
X^{2} - \left( c^{2}-4 \right)Y^{4}=4,
\]
with $XY$ odd.

This additional odd positive solution exists exactly when $c$ is a perfect square. In this case,
the second positive odd solution is $(X,Y) = \left( c^{2}-2,\sqrt{c} \right)$.
\end{lemma}

\begin{proof}
For $c=3$, we can use {\sc magma} to show that the only solutions in positive integers
are $(X,Y)=(3,1)$ and $(322,12)$. So we will assume that $c \geq 5$ in what follows.

By Lemma~\ref{lem:x^2-(c^2-4)y^2=-4}, we can apply Lemma~\ref{lem:Cohn67} with
$d=c^{2}-4$ to our equation. In our case, $\left( a_{1}, b_{1} \right)=(c,1)$.
Therefore, $Y^{2} \in \left\{ 1,c,c^{2}-1 \right\}$. By assumption, $Y$ is odd.
Therefore $Y^{2} \neq c^{2}-1$.
Consequently, one positive solution is obtained from $Y^{2}=1$, in which case $X=c$.
A second solution can come from $Y^{2}=c$ which happens only if $c$ is a perfect
square. In this last case, $X=c^{2}-2$.
\end{proof}

\begin{lemma}
\label{lem:x^2-(c^2-1)y^4=1}
If $c \geq 2$, then $(X,Y)=(c,1)$ is the only solution in positive integers
of the equation 
\begin{equation}
\label{eq:x^2-(c^2-1)y^4=1}
X^{2} - \left( c^{2}-1 \right)Y^{4}=1,
\end{equation}
with $Y$ odd.
\end{lemma}

\begin{proof}
Equation~\eqref{eq:x^2-(c^2-1)y^4=1} is a special case of the equation 
$X^{2}-DY^{4}=1$, where $D$ is a non-square integer. So we can apply
Lemma~\ref{lem:TVW2005} with $D=c^{2}-1$.
We have $\varepsilon_{D}=c+\sqrt{c^{2}-1}$, so $T_{1}=c$ and $U_{1}=1$.
We find that $U_{2}=2c$ and $U_{4}=8c^{3}-4c$. Since we are only considering
solutions with $Y$ odd, we must have $Y^{2}=U_{1}=1$ and consequently $X=c$.
\end{proof}

In the proof of the next proposition we apply results of \cite{Ljun54} concerning
the equation $Ax^{2}-By^{4}=C$, where $C\in\{1,2,4\}$, $A$ and $B$ are positive integers
with $AB$ odd when $C$ is even, $AB$ is not a square and, if $C=1$ or $4$, $A$
is not a square.
For the reader's convenience, the translated results of \cite{Ljun54} are included in 
Appendix~\ref{app:ljunggren}. More precisely, we will apply Theorems~I, II and III of
Appendix~\ref{app:ljunggren}.

\begin{lemma}
\label{lem:w_n in S}
If either $a$ is even or $n\not\equiv 1\pmod{3}$, then $w_{n}(a)\in\rS$ is impossible. 
\end{lemma}

\begin{proof}
By \eqref{eq:gcd(t,w)} we have $\gcd \left( t_{n}(a), w_{n}(a) \right)=1$. Combining
this with \eqref{eq:tn2-wn2-relationship} for $b_{1}=1$, it follows by a simple
parity argument that $t_{n}(a)$
and $w_{n}(a)$ are odd. If $w_{n}(a)\in\rS$ with $n>1$, then, putting $w_{n}(a)=W^{2}$ and 
$t_{n}(a)=T$, with $W>1$, $T>1$, we have, by \eqref{eq:tn2-wn2-relationship} with $b_{1}=1$,
\begin{equation}
\label{eq:(a+2)T^2-(a-2)W^4=4}
(a+2)T^{2}-(a-2)W^{4}=4, \quad \text{$T,W$ odd $>1$ and relatively prime.}
\end{equation}

\emph{First case: $a$ is odd.}
We note that $(a+2)(a-2)$ is not a square for, otherwise, the relatively prime
numbers $a+2$ and $a-2$ must both be squares, which is impossible for $a>2$.
If $a+2$ is not a square, then, for the equation 
$(a+2)x^{2}-(a-2)y^{4}=4$ we may apply Theorem~\ref{thm:III} in 
Appendix~\ref{app:ljunggren} (\cite[Satz III]{Ljun54}) which concerns the equation 
$Ax^{2}-By^{4}=4$.
The conditions of that theorem are fulfilled for $A=a+2, B=a-2$,
and we conclude that there is at most one positive solution $(x,y)$. Since $x=1$,
$y=1$ is actually such a solution, it is the only such solution. 
Hence in \eqref{eq:(a+2)T^2-(a-2)W^4=4} we necessarily have $T=1$ and $W=1$ which
is rejected.

Next, assume that $a+2=c^{2}$ with $c$ odd. 
Then \eqref{eq:(a+2)T^2-(a-2)W^4=4} becomes
\begin{equation}
\label{eq:when a+2=square no 1}
(cT)^{2} - \left( c^{2}-4 \right)W^{4}=4.
\end{equation}

This is a special case of $y^{2}=dx^{4}+4$ with $d=c^{2}-4$,
$y=cT$ and $x=W^{2}$. 
If $c=3$, then we have $d=5$ and the only positive solutions are $(x,y)=(1,3)$
and $(12,322)$ according to Lemma~\ref{lem:Cohn66}.
This implies that, in \eqref{eq:when a+2=square no 1}, $W\in\{1,12\}$ and both are rejected 
because, by \eqref{eq:(a+2)T^2-(a-2)W^4=4}, $W$ is odd $>1$.

If $c>3$, then, by Lemma~\ref{lem:x^2-(c^2-4)y^4=+4}, 
equation~\eqref{eq:when a+2=square no 1} only has positive integer solutions
with $W=1$, which is rejected, or $W^{2}=c$.
In the second case, it follows by \eqref{eq:when a+2=square no 1} that 
$c^{2} \mid 4$. But this is not possible as $c \ge 3$ is odd.

\emph{Second case: $a\equiv 2\pmod{4}$.} 
Now we write equation~\eqref{eq:(a+2)T^2-(a-2)W^4=4} as
\[
\frac{a+2}{4}\cdot T^{2} - \frac{a-2}{4}\cdot W^{4} = 1.
\] 

As in the previous case, $\frac{a+2}{4}\cdot\frac{a-2}{4}$ is not a square. If $(a+2)/4$ 
is not a square, then, applying 
Theorem~\ref{thm:I} of Appendix~\ref{app:ljunggren} (\cite[Satz~I]{Ljun54}) 
or, equivalently, Theorem~3.6 of \cite{Walsh98}, we infer that 
the equation $\frac{a+2}{4}x^{2} -\frac{a-2}{4} y^{4}=1$ has at most one positive 
solution. As $(x,y)=(1,1)$ is such a solution, we conclude that this is the
only positive solution of the equation. This implies that $(T,W)=(1,1)$ which
we reject.
	
If $a+2$ is a square, then $a=4c^{2}-2$ with $c>1$ and \eqref{eq:(a+2)T^2-(a-2)W^4=4} 
becomes
\[
(cT)^{2} - \left( c^{2}-1 \right)W^{4}=1.
\]

Since $W$ is odd, Lemma~\ref{lem:x^2-(c^2-1)y^4=1} implies $W=1$, which we reject.
	
\emph{Third case: $a\equiv 0\pmod{4}$.} 
Now \eqref{eq:(a+2)T^2-(a-2)W^4=4} is written
\[
\frac{a+2}{2}\cdot T^{2} - \frac{a-2}{2}\cdot W^{4} = 2,
\]
where both $(a+2)/2$ and $(a-2)/2$ are odd and their product is not a square. 
We apply Theorem~\ref{thm:II} of Appendix~\ref{app:ljunggren} 
(\cite[Satz~II]{Ljun54}) concerning $Ax^{2}-By^{4}=C$, with $A=(a+2)/2, B=(a-2)/2$
and $C=2$. The least positive solution of $Az_{1}^{2}-Bz_{2}^{2}=C$ is
$\left( a_{0}, b_{0} \right)=(1,1)$ and the 
requirement for the application of Theorem~\ref{thm:II} of Appendix~\ref{app:ljunggren}
is that $\dfrac{4}{C}Bb_{0}^{2}+3$ is not a square. In our case, this quantity
is equal to $a+1$. Therefore, if $a+1$ is not a square then, by Theorem~\ref{thm:II}
of Appendix~\ref{app:ljunggren}, we infer 
that our equation has at most one positive solution. But, with $A,B,C$ as above,
$(x,y)=(1,1)$ is a solution of $Ax^{2}-By^{4}=C$. Therefore this is the only
positive solution, forcing $(T,W)=(1,1)$ which, clearly, we reject.

If $a+1=c^{2}$, then $c\geq 3$ is odd, and \eqref{eq:(a+2)T^2-(a-2)W^4=4} becomes
\begin{equation}
	\label{eq:when a+2=square no 3}
\frac{c^{2}+1}{2}\,T^{2}-\frac{c^{2}-3}{2}\,W^{4}=2,
\end{equation}
where the coefficients of $T^{2}$ and $W^{4}$ are odd and their product is not a square. 
Now we can apply Theorem~\ref{thm:I} of Appendix~\ref{app:ljunggren}
(Satz~I of \cite{Ljun54}) to conclude that
equation~\eqref{eq:when a+2=square no 3} has at most two positive solutions $(T,W)$.
But we see that $(T,W)=(1,1), \left( c^{2}-2,c \right)$ are actual solutions,
and the first one is not acceptable.

Therefore, $T=c^{2}-2$ and $W=c$, which means (by the definition of $T$ and $W$) that, for some
$n>1$, we have $t_{n} \left( c^{2}-1 \right)=c^{2}-2$ and 
$w_{n} \left( c^{2}-1 \right)=W^2=c^2$.
This is impossible because 
$w_{n} \left( c^{2}-1 \right) \geq w_{2} \left( c^{2}-1 \right)=c^{4}-c^{2}-1>c^2$
(see Remark~\ref{rem:increasing}).
\end{proof}

\subsubsection{$a$ is odd and $n\not\equiv 1\pmod{3}$}
\label{subsubsec:a_odd_n_not_1_mod3}

\begin{lemma}
	\label{lem:t_n in S odd a}
If $a$ is odd and $n>1$, then $t_{n}(a)\in\rS$ is impossible.	
\end{lemma}

\begin{proof}
If $t_{n}(a)\in\rS$ with $n>1$, then, putting $t_{n}(a)=T^{2}$ and $w_{n}(a)=W$ 
with $T>1$ and $W>1$, we have \eqref{eq:tn2-wn2-relationship} with $b_{1}=1$,
\begin{equation}
\label{eq:(a+2)T^4-(a-2)W^2=4}
(a+2)T^{4}-(a-2)W^{2}=4, \quad \text{$T,W$ odd $>1$.}
\end{equation}

Since $a$ is odd, equation~\eqref{eq:(a+2)T^4-(a-2)W^2=4} falls under the scope of
Lemma~\ref{lem:L67} with $A=a+2$ and $B=a-2$. The equation $Az_{1}^{2}-Bz_{2}^{2}=4$
has $\left( a_{1}, b_{1} \right)= (1,1)$ as its
least solution in odd positive integers. Observe that $Aa_{1}^{2}-3=a-1$. 
Therefore, by Lemma~\ref{lem:L67}, we conclude the following.
If $a-1\not\in\rS$, then $T=1$ yields the only solution.
If $a-1=k^{2}$, then there is exactly one more 
solution, namely the one with $T=k$ (and $W=k^{2}+2$). 
The solution $T=1$ is rejected. The solution $t_{n}(a)=T^{2}=k^{2}$,
is also rejected, because $t_{1}(a)=a-1=k^{2}$ and for $a>2$, the
sequence $\left( t_{n}(a) \right)_{n \geq 0}$ is strictly increasing
(see Remark~\ref{rem:increasing}).
\end{proof}

\begin{proposition}
\label{prop:2n+1 n<>1(mod 3) a odd}
If $a$ is odd and $n\not\equiv 1\pmod{3}$, then $u_{2n+1}(a) \in p\rS$ is impossible.	
\end{proposition}

\begin{proof}
If $u_{2n+1}(a)\in p\rS$ then, by Lemma~\ref{lem:t_n or w_n must be square}, 
$w_{n}(a)\in\rS$ or $t_{n}(a)\in\rS$. Both alternatives are impossible; the first 
by Lemma~\ref{lem:w_n in S} 
and the second by Lemma~\ref{lem:t_n in S odd a}.
\end{proof}

\subsubsection{$a$ is even}
\label{subsubsec:a_even}

%
We recall our assumption at the beginning of Section~\ref{sec:u_2n+1 in pS} that
$a \geq 3$ (hence $a \geq 4$ in this subsubsection) and $n \geq 2$. Under these
conditions and with our present knowledge, the best result concerning the relation
$t_{n}(a)\in\rS$ is Lemma~\ref{lem:t_n square,general a}. It
asserts that if $t_{n}(a)\in\rS$, then $n$ is necessarily odd and
either $(n,a)=(3,6)$ or $2n+1$ is a prime $\geq 19$ and $a\equiv 2\pmod{4}$.

Before proving Lemma~\ref{lem:t_n square,general a} we treat the values $n=3$
and $5$ separately, establishing the following result.

\begin{lemma}
\label{lem:t_5=square}
The only integer solutions $(x,y)$ with $y>0$ for $t_{3}(x)=y^{2}$ are
$(x,y)=(-1,1), (0,1)$, $(2,1), (6,13)$, and for $t_{5}(x)=y^{2}$ are $(x,y)=(-1,1), (1,1), (2,1)$.
Consequently, if $a>2$, then $t_{5}(a)\in\rS$ 
is impossible and $t_{3}(a)\in\rS$ is true only when $a=6$.
\end{lemma}

\begin{proof}
We have $t_{3}(x)=x^{3}-x^{2}-2x+1$. The elliptic curve defined by
$y^{2}=x^{3}-x^{2}-2x+1$ is of rank $1$. Applying the {\tt IntegralPoints}
function in {\sc magma}\cite{magma}\footnote{For the background of this function,
see \cite{StTz94} and/or \cite{GePeZi94}.}
we see that the only integer points with positive $y$-coordinate are those
stated in the lemma.

Next we consider the equation
\begin{equation}
\label{eq:t_5(x)=y^2}
t_{5}(x)=x^{5}-x^{4}-4x^{3}+3x^{2}+3x-1=y^{2}
\end{equation} 

For $x\leq -2$ we have $t_{5}(x)<0$. Also, $t_{5}(-1)=1, t_{5}(0)=-1$ and
$t_{5}(1)=1$. Therefore, in what follows we assume that $x\geq 2$.

Let $K=\bbQ(\theta)$ where $t_{5}(\theta)=0$. Actually, $K$ is the maximal real
subfield of the $11$-th cyclotomic field and $\theta=-\left( \zeta_{11}+\zeta_{11}^{-1} \right)$,
where $\zeta_{11}$ is a primitive eleventh root of unit. Thus, $K/\bbQ$ is a
Galois extension and 
\[
t_{5}(x)=\prod_{i=0}^{4} \left( x-\sigma^{i}(\theta) \right)
= \prod_{i=0}^{4} L_{i}(x) \quad
\text{with}\; L_{i}(x):=x-\sigma^{i}(\theta),
\]
where $\sigma$ is the generator of the Galois group, with $\sigma(\theta)=2-\theta^{2}$.
For $i=0,\ldots,4$ we put $\epsilon_{i+1}:=\sigma^{i}(\theta)$, so that
$\epsilon_{1},\ldots,\epsilon_{5}$ are the roots of $t_{5}(x)$. It turns out
that $\left\{ \epsilon_{1},\ldots,\epsilon_{4} \right\}$ is a set of fundamental
units\footnote{For instance, using PARI/GP, we find that the regulator is $1.63569\ldots$,
which equals the regulator of the unit subgroup generated by the $\epsilon_{i}$'s.}
and we have $N \left( \epsilon_{i} \right)=+1$ for $i=1,\ldots,4$. By the
definition of the $\epsilon_{i}$'s, we have 
\begin{equation}
\label{eq:relations_of_epsilons}
\sigma \left( \epsilon_{j} \right)
=\epsilon_{j+1} \;\; \text{for}\; j=1,2,3 \quad\text{and}\;\;
\sigma \left( \epsilon_{4} \right)=\epsilon_{5}
= \left( \epsilon_{1}\epsilon_{2}\epsilon_{3}\epsilon_{4} \right)^{-1}.
\end{equation}

Let $N$ denote the norm function of the extension $K/\bbQ$. For $1\leq i<j\leq 5$,
we have $\left| N \left( \epsilon_{j}-\epsilon_{i} \right) \right|=11$. Therefore,
if $(x,y)$ is an integer
solution of \eqref{eq:t_5(x)=y^2}, then $x-\epsilon_{j}$ and $x-\epsilon_{i}$ are
relatively prime for any $i,j$ with $1\leq i<j\leq 5$. For, otherwise, any prime
divisor of them divides $11$, hence
$11 | y$ (in $\bbZ$). But then, $t_{5}(x)\equiv 0 \pmod{11^{2}}$ which is impossible,
as a direct computation shows. Thus, in the equation $L_{0}(x)\cdots L_{4}(x)
=t_{5}(x)=y^{2}$ the factors in 
the left-most side are relatively prime in pairs and this implies that
$L_{0}(x)= \pm \left( \prod_{j=1}^{4}\epsilon_{j}^{i_{j}} \right)\!\alpha_{0}^{2}$,
where each $i_{j} \in \{0,1\}$ and $\alpha_{0} \in K$ is an algebraic integer. 
For $i=1,\ldots,4$ we put $\alpha_{i}=\sigma^{i} \left( \alpha_{0} \right)$. Since 
$N \left( L_{0}(x) \right)=t_{5}(x)=y^{2}$ and $N \left( \epsilon_{i} \right)=+1$
for every $i$, the minus sign must be excluded.

We choose the embedding $^{*}:K\hookrightarrow\bbR$ with $\theta^{*}=-1.68250706\ldots$.
Then $\epsilon_{i}^{*}<0$ for $i=1,2$ and $\epsilon_{i}^{*}>0$ for $i=3,4$. We
also check that, for $x \geq 2$ we have 
$L_{0}(x)^{*}>L_{1}(x)^{*}>L_{3}(x)^{*}>L_{2}(x)^{*}>L_{4}(x)^{*}>0$.
In view of these observations we conclude that
\[
0<L_{0}(x)^{*}=
\left( \epsilon_{1}^{*} \right)^{i_{1}}
\left( \epsilon_{2}^{*} \right)^{i_{2}}
\left( \epsilon_{3}^{*} \right)^{i_{3}}
\left( \epsilon_{4}^{*} \right)^{i_{4}} \left( \alpha_{0}^{*} \right)^{2},
\]
implying that $i_{1}+i_{2} \equiv 0\pmod{2}$.

By \eqref{eq:relations_of_epsilons}, we compute 
$L_{1}(x)=\sigma \left( L_{0}(x) \right)=
\epsilon_{1}^{-i_{4}}\epsilon_{2}^{i_{1}-i_{4}}\epsilon_{3}^{i_{2}-i_{4}}
\epsilon_{4}^{i_{3}-i_{4}}\alpha_{1}^{2}$.
Using the embedding $^{*}$, as before, we conclude that $-i_{4} + \left( i_{1}-i_{4} \right)$
must be even, i.e. $i_{1}$ is even, hence $i_{1}=0$ and by our previous conclusion
that $i_{1}+i_{2}$ is even, we also have $i_{2}=0$. Proceeding in an exactly
similar way we obtain, from $L_{2}(x)^{*}>0$, \footnote{For this inequality to hold
it is necessary that $x \geq 2$. This explains why the solution with $x=1$ is
not found in the end.} that $i_{4}=0$, and from $L_{3}(x)^{*}>0$ that $i_{3}=0$. 
Therefore $L_{0}(x)=\alpha_{0}^{2}$ and, consequently, $L_{i}(x)=\alpha_{i}^{2}$
for $i=1,2,3,4$.

Now we consider the elliptic curve $E: L_{0}(X)L_{1}(X)L_{2}(X)=Y^{2}$ defined
over $K$ and we want to compute all points on $E$ with $\bbQ$-rational coordinate
$X$. This problem is solved by
a rather straightforward application of the simplest case of the \emph{Elliptic Chabauty} method
which uses the functions {\tt PseudoMordellWeilGroup} and {\tt Chabauty} of 
{\sc magma}\cite{magma}. We encourage the interested reader to see 
Example H130E24 in the on-line Magma handbook \cite{onlineMagma}.
In our case, it turns out that the only (finite) $K$-points $(X,Y)$ with $\bbQ$-rational $X$ are 
$(2,\pm(\theta-1))$. Therefore the only integer solution of \eqref{eq:t_5(x)=y^2} with $x\geq 2$
and $y>0$ is $(x,y)=(2,1)$.
\end{proof}

In the proof of Lemma~\ref{lem:t_n square,general a} we will use the following lemma.

\begin{lemma}
\label{lem:TogbeVoutierWalsh}
Assume that the equation
\begin{equation}
\label{eq:ax^4-by^2=1}
AX^{4}-BY^{2}=1,\quad \text{$A>1$ not a square, $B\ge 1$}
\end{equation}
has a solution $(X,Y)$ with $X,Y\ge 1$ and denote by $(x,y)$ the least such solution. 
Let 
\[
t:=Ax^{4}-1, \quad \tau:=\sqrt{t+1}+\sqrt{t}, \quad 
\tau^{2k+1}:=V_{2k+1}\sqrt{t+1}+W_{2k+1}\sqrt{t}.
\]

Equation~\eqref{eq:ax^4-by^2=1} has at most one positive solution $(X,Y)$
with $Y>y$. If such a solution exists and $Ax^{4}>2$, then $X^{2}=V_{p}$
for some prime $p\equiv 3\pmod{4}$.
\end{lemma}

\begin{proof}
According to Theorem~1.1 of \cite{Akht}, equation~\eqref{eq:ax^4-by^2=1} has at
most two solutions in positive integers. Assume that $(X,Y)$ is the
sole such solution with $Y>y$. Since $\left( X^{2},Y \right)$
is a solution of $Av^{2}-Bw^{2}=1$, we know that
$\left( X^{2},Y \right) = \left( V_{2n+1},W_{2n+1} \right)$
for some $n \geq 1$ -- see \cite[page~40]{TogVouWal2005}. Thus
$V_{2n+1}$ is a perfect square and $t>1$ by the assumption $Ax^{4}>2$.
Then Theorem~2.1 of \cite{TogVouWal2005} says that $n$ cannot be even, hence 
$2n+1=4m+3$ and now Corollary~2.1 of the same paper says that $4m+3$ must be a prime.
\end{proof}

\begin{lemma}
\label{lem:t_n square,general a}
If $a>2$ is even and $n>1$, then $t_{n}(a)$ can be a square only when $a \equiv 2 \pmod{4}$, 
$n$ is odd and either $(n,a)=(3,6)$ or else $n\ge 9$ and $2n+1$ is prime.
For such an $a$, there is at most one $n>1$ with $t_{n}(a)$ a square. 
\end{lemma}

\begin{proof}
Since $a^{2}>t_{2}(a)=a^{2}-a-1>(a-1)^{2}$ for $a>2$, $t_{2}(a)$ cannot be a square if $a>2$.
Also, under the assumptions concerning $a$ and $n$, it follows from Lemma~\ref{lem:t_5=square}
that if $n=3$ then $a=6$ and also that $t_{5}(a)$ cannot be a square.
Furthermore, $t_{4}(a)=a^{4}-a^{3}-3a^{2}+2a+1$ and applying the function
{\tt IntegralQuarticPoints} of {\sc magma}\cite{magma}
or the elementary method of \cite{Pou} (cf. footnote~\ref{foot:Poulakis}),
we see that $t_{4}(a)$ is not a square if $a>2$.
Therefore, in what follows we can assume that $n>5$.
Also recall that by hypothesis, we know that $a \geq 4$.

Assume that $t_{n}(a)=T^{2}$ and put $w_{n}(a)=W$. Since $a$ is even, both $T$
and $W$ are odd (by \eqref{eq:tn2-wn2-relationship} and \eqref{eq:gcd(t,w)}).
Also, $T,W>1$.
	
First let $a\equiv 2 \pmod{4}$.

We start by considering $a=6$. Here equation~\eqref{eq:tn2-wn2-relationship}
becomes $2T^{4}-W^{2}=1$. The only integral point $(T,W)$ with $T,W>1$ furnished
by the {\sc magma} function {\tt IntegralQuarticPoints} is $(T,W)=(13,239)$,
Hence $t_{n}(6)=T^{2}=13^{2}$, which holds only if $n=3$.

So $a \geq 10$ and we write \eqref{eq:tn2-wn2-relationship} with $b_{1}=1$ as 
\begin{equation}
\label{eq:(a+2)/4*T^4-(a-2)/4*W^2=1}
\frac{a+2}{4}\,T^{4} - \frac{a-2}{4}\,W^{2} = 1.
\end{equation}

First, we can ignore the case when $(a+2)/4$ is a perfect square. Indeed, assume
that $(a+2)/4=b^{2}$ and let $b=b_{1}b_{2}^{2}$, with $b_{1} \geq 1$ square-free.
Note that $(a-2)/4$ cannot be also a square. Therefore $(a-2)/4=d_{1}d_{2}^{2}$
with $d_{1}>1$ square-free. Then, equation~\eqref{eq:(a+2)/4*T^4-(a-2)/4*W^2=1}
becomes $b_{1}^{2} \left( b_{2}T \right)^{4} - d_{1} \left( d_{2}W \right)^{2}=1$.

In case that $b_{1}=1$, we have an equation of the form $X^{4}-d_{1}Y^{2}=1$. 
According to \cite{Cohn97}, this equation has at most one solution in positive
integers $X,Y$, except if $d_{1}=1785$, in which case there are exactly two positive
solutions, namely, $(X,Y)=(13,4)$ and $(239,1352)$. Regardless of whether $d_{1}=1785$
or not, the equation
$\left( b_{2}T \right)^{4} - d_{1} \left( d_{2}W \right)^{2}=1$ does
have the solution $(T,W)=(1,1)$. Hence this is the only solution if $d_{1} \ne 1785$.
Since $W>1$, this solution is rejected. When $d=1785$ we have $\left( b_{2}T, d_{2}W \right)
\in \{ (13,4),\, (239,1352) \}$. If $d_{2}W=4$, then, since $W$ is odd, we must have
$W=1$ which we reject. If $b_{2}T=239$ (prime), then, since $T>1$, we must
have $T=239$ and $b_{2}=1$, implying $b=b_{1}b_{2}^{2}=1$, hence $(a+2)/4=1$
which cannot hold since $a>2$. 

If $b_{1}>1$, then we have the equation 
$b_{1}^{2} \left( b_{2}T \right)^{4} - d_{1} \left( d_{2}W \right)^{2}=1$
with $b_{1}$ and $d_{1}$ both $>1$ and square-free. Theorem~1.2 of Bennett and
Walsh \cite{BW1999} says that there is at most one solution in positive integers
$(T,W)$, and since $(T,W)=(1,1)$ is such a solution here, it must be the only one
which, once again, we reject.

Thus, in the sequel, $(a+2)/4$ is not a perfect square.
We apply Lemma~\ref{lem:TogbeVoutierWalsh} to 
equation~\eqref{eq:(a+2)/4*T^4-(a-2)/4*W^2=1}
with $A=(a+2)/4>2$ (since $a \geq 10$), $B=(a-2)/4>1$, $(x,y)=(1,1)$, $t=(a-2)/4$ 
and $\tau=\sqrt{t+1}+\sqrt{t}=\left( \sqrt{a+2}+\sqrt{a-2} \right)/2$.
Then, in the notation of that lemma, $V_{1}=1=t_{0}(a)$ and $V_{3}=4t+1=a-1=t_{1}(a)$. 
Moreover, according to \cite{TogVouWal2005}, the following recurrence relation holds:
$V_{2k+3}=(4t+2)V_{2k+1}-V_{2k-1}$ for $k=1,2,\ldots$ and, in our present equation, 
$4t+2=a$. Also recall that the recurrence relation
satisfied by the sequence $\left( t_{n}(a) \right)_{n \geq 0}$ is $t_{n+1}(a)=a t_{n}(a)-t_{n-1}(a)$.
These observations lead to the conclusion that $V_{2k+1}=t_{k}(a)$ for every $k \geq 0$.
Finally, in the notation of Lemma~\ref{lem:TogbeVoutierWalsh}, $Ax^{4}=(a+2)/2>2$.
Therefore, according to the conclusion of the lemma,
the solution $(T,W)$ of equation~\eqref{eq:(a+2)/4*T^4-(a-2)/4*W^2=1}, being larger
than $(1,1)$, must satisfy $T^{2}=V_{p}$, where $p\equiv 3\pmod{4}$ is a prime number. 
But $T^{2}=t_{n}(a)=V_{2n+1}$, therefore $2n+1=p\equiv 3 \pmod{4}$. 
Thus, $n$ is odd and $2n+1$
is prime. The smallest odd $n>5$ which satisfies this condition is $9$. Further, if for a certain 
$a$ there exists a solution $(T,W)$ of \eqref{eq:(a+2)/4*T^4-(a-2)/4*W^2=1} larger than $(1,1)$
and $T^{2}=t_{n}(a)$, then no other solution to \eqref{eq:(a+2)/4*T^4-(a-2)/4*W^2=1} larger than 
$(1,1)$ exists. This implies that, if $t_{m}(a)$ is a square for some $m \ne n$, then, necessarily, 
$t_{m}(a)=T^{2}=t_{n}(a)$, which is impossible because the sequence 
$\left( t_{n}(a) \right)_{n \geq 0}$
is strictly increasing (see Remark~\ref{rem:increasing}).
	
Next, let $a\equiv 0\pmod{4}$. Now \eqref{eq:(a+2)T^4-(a-2)W^2=4} becomes
\begin{equation}
\label{eq:(a+2)/2*T^4-(a-2)/2*W^2=2}
\frac{a+2}{2}\,T^{4} - \frac{a-2}{2}\,W^{2} = 2.
\end{equation} 	
and the coefficients of $T^{4}$ and $W^{2}$ are odd.
By Theorem~1.4 of \cite{Yuan-Li}, the equation $Ax^{4}-By^{2}=2$ with positive
$A$ and $B$ has at most one solution in positive integers.
Applied to equation~\eqref{eq:(a+2)/2*T^4-(a-2)/2*W^2=2}, this says that,
the only solution in positive integers is $(T,W)=(1,1)$.
Since $T=t_{n}(a)>1$, this means that, in the present situation, $t_{n}(a)$ cannot
be a square.
\end{proof}

\pagebreak
\appendix 

\section{Interdependence of the results of this paper}
\label{app:interdependence}

The ``tree'' below shows the interdependence of the various numbered results in this paper.

T=Theorem \hspace*{5mm} P=Proposition \hspace*{5mm} L=Lemma

$R^{*}$ where $R\in\{T,P\}$ means that the proof of $R$ is heavily based on the
previous work of others concerning either equations $Ax^{4}-By^{2}=C\in \pm\{1,2,4\}$
or Lucas sequences.
\\[5pt]
\parbox[c]{40pt}{
\begin{tikzpicture}
\tikzset{level distance=50pt}
\Tree [.\labthm{res}{upp} \labthm{res}{low} ]
\end{tikzpicture}
}
means that ``res-upp'' is based on ``res-low''.

\begin{center}
\begin{tikzpicture}[sibling distance=1pt, scale=0.75]
\tikzset{level distance=80pt}
\Tree [.\labthm{T}{\ref{thm:at most one solution}}      
           [.\labthm{L*}{\ref{lem:RibMcDan}} ]  
           [.\labthm{L*}{\ref{lem:u_n(3)}} ]  
           [.\labthm{P*}{\ref{prop:mp}} ]  
           [.\labthm{P}{\ref{prop:u_2n in pS}}                   
                 [.\labthm{L}{\ref{lem:u_2n not in pS, a odd, n=0(mod3)}}  
                   [.\labthm{L}{\ref{lem:2n a odd n=0 mod3 n even}}       
                     [.\labthm{P*}{\ref{prop:mp}} ]                        
                   ]           
              [.\labthm{L}{\ref{lem:2n a odd n=0 mod3 n odd}}        
                 [.\labthm{P*}{\ref{prop:mp}} ]                        
              ]          
                 ] 
                 [.\labthm{L}{\ref{lem:u_2n a and n even}}               
                   [ .\labthm{P*}{\ref{prop:mp}} ]                                      
                 ]          
                 [.\labthm{L}{\ref{lem:u_2n in pS => u_m in pS}}         
                   [.\labthm{L}{\ref{lem:u_2n in pS => u_{n} in pS}}   
                     [.\labthm{P*}{\ref{prop:mp}} ]                                   
                   ]            
                 ]
           ] 
           [.\labthm{L}{\ref{lem:t_n or w_n must be square}} ]  
           [.\labthm{P}{\ref{prop:u_2n+1 not square with odd a}}  
            [.\labthm{L}{\ref{lem:2n+1 n=1(mod 3) a odd}}  
              [.\labthm{P*}{\ref{prop:mp}} ]                        
            ]
             [.\labthm{P}{\ref{prop:2n+1 n<>1(mod 3) a odd}}    
               [.\labthm{L}{\ref{lem:t_n or w_n must be square}}  ]          
               [.\labthm{L*}{\ref{lem:w_n in S}}                                 
                 [.\labthm{L}{\ref{lem:Cohn66}}  ]           
                 [.\labthm{L}{\ref{lem:x^2-(c^2-4)y^4=+4}}   
                   [.\labthm{L}{\ref{lem:x^2-(c^2-4)y^2=-4}}  ]   
                   [.\labthm{L}{\ref{lem:Cohn67}} ]       
                 ]
                 [.\labthm{L}{\ref{lem:x^2-(c^2-1)y^4=1}}        
                   [.\labthm{L}{\ref{lem:TVW2005}}  ]     
                 ]
               ]          
              [.\labthm{L*}{\ref{lem:t_n in S odd a}}        
                 [.\labthm{L}{\ref{lem:L67}}  ]             
              ]
              ]     
             ]
           [.\labthm{L*}{\ref{lem:w_n in S}}                                 
                 [.\labthm{L}{\ref{lem:x^2-(c^2-4)y^4=+4}}   
                   [.\labthm{L}{\ref{lem:x^2-(c^2-4)y^2=-4}}  ]   
                   [.\labthm{L}{\ref{lem:Cohn67}}  ]     
                 ]
                 [.\labthm{L}{\ref{lem:x^2-(c^2-1)y^4=1}}        
                    [.\labthm{L}{\ref{lem:TVW2005}}  ]     
                 ]
               ]
           [.\labthm{L*}{\ref{lem:t_n square,general a}}       
             [.\labthm{L}{\ref{lem:t_5=square}} ]            
             [.\labthm{L}{\ref{lem:TogbeVoutierWalsh}} ]  
           ]                                           
      ]    
\end{tikzpicture}  
\end{center}
The proof of $\mathrm{P}^*$\,\ref{prop:mp} is based on the proof of 
$\mathrm{L}^*$\,\ref{lem:RibMcDan} and $\mathrm{L}^*$\,\ref{lem:u_n(3)}. 
In the above ``tree'' we have omitted the numerous additional rows that this would require, due to 
space limitations.

\pagebreak
\section{``A Theorem about the Diophantine Equation 
$Ax^{2}-By^{4}=C$ ($C=1,2,4)$'' by W. Ljunggren}
\label{app:ljunggren}

\renewcommand{\theappthm}{\Roman{appthm}}
\setcounter{equation}{0}
\renewcommand{\theequation}{\arabic{equation}}

For the readers' convenience, we provide a translation here of the theorems in
Ljunggren's paper \cite{Ljun54}.
Although we do not use his Theorem~\ref{thm:IV}, we include it for completeness, 
also taking this opportunity to add an extra additional condition to Theorem~\ref{thm:IV} that is 
required by Ljunggren's proof. To aid references to his paper, we maintain
Ljunggren's notation, as well as his labelling of his theorems and equations, here.

Let $A$, $B$ and $C$ be rational, positive integers and $C=1$, $2$ or $4$. If $C$
is even, then $AB$ should be odd. Further, let $A$ be square-free, $AB$ not a
square and $C= 2$ for $A=1$. In an earlier work \cite{Ljun38} Ljunggren proved the
following theorems.
We note that, for Theorems I, II and III the requirement 
``$A$ be square-free, $AB$ not a square and $C=2$ for $A=1$'' 
can be very easily
replaced by \emph{``$AB$ not a square and $A$ can be a square only if $C=2$''}. 
In this paper we use only Theorems I, II and III, so we adopt this condition.

\begin{appthm}
\label{thm:I}
The equation
\begin{equation}
\label{eq:1}
Ax^{2}-By^{4}=C
\end{equation}
in the case of $C=1$ has at most one solution in positive, rational integers
$x$, $y$. If $B \equiv -1 \pmod{4}$, this also applies to $C=4$. In all other cases
there are at most two solutions.
\end{appthm}

One only needs to consider values of $A$, $B$ and $C$ for which an equation of
the form
\begin{equation}
\label{eq:2}
Az_{1}^{2}-Bz_{2}^{2}=C
\end{equation}
can be solved in natural numbers $z_{1}$ and $z_{2}$. Suppose that $\left( z_{1}, z_{2} \right)
=(a,b)$ is the smallest positive solution of \eqref{eq:2}.

\begin{appthm}
\label{thm:II}
If the quantity $\dfrac{4}{C}Bb^{2}+3$ is not a square, then $Ax^{2}-By^{4}=C$
has at most one solution in positive integers $x$ and $y$.
\end{appthm}

\begin{appthm}
\label{thm:III}
The equation $Ax^{2}-By^{4}=4$ has at most one solution in positive and relatively prime
integers $x$ and $y$.
\end{appthm}

\begin{appthm}
\label{thm:IV}
Suppose that $b$ in the minimal solution of \eqref{eq:2} is odd.
The equation $Ax^{4}-By^{2}=4$ has at most two solutions in positive integers
$x$ and $y$. If the quantity $Bb^{2}+1$ is not a square, then there
is at most one such solution. This also applies to solutions in coprime numbers
$x$ and $y$.
\end{appthm}

Note that we have added the extra condition here that $b$ must be odd. This extra
condition is required in Ljunggren's proof.

\bibliographystyle{amsplain}

\end{document}